\numberwithin{equation}{section}
\newtheorem{thm}{Theorem}[subsection]
\newtheorem{lem}[thm]{Lemma}
\newtheorem{prop}[thm]{Proposition}
\newtheorem{cor}[thm]{Corollary}
\theoremstyle{definition}
\newtheorem{definition}[thm]{Definition}
\newtheorem{example}[thm]{Example}
\newtheorem{rmk}[thm]{Remark}
\newcommand{\Z}{\mathbb{Z}}
\newcommand{\K}{\mathbb{K}}
\newcommand{\id}{\operatorname{id}}
\newcommand{\ov}{\overline}
\newcommand{\MM}{{\mathcal M}}
\newcommand{\Hom}{\operatorname{Hom}}
\newcommand{\Ext}{\operatorname{Ext}}
\newcommand{\Aut}{\operatorname{Aut}}
\newcommand{\dgmod}{\operatorname{dgmod}}
\renewcommand{\a}{\alpha}
\renewcommand{\b}{\beta}
\newcommand{\La}{\Lambda}
\newcommand{\Ga}{\Gamma}
\newcommand{\wt}{\widetilde}
\newcommand{\ot}{\otimes}
\newcommand{\sub}{\subset}
\renewcommand{\mod}{\operatorname{mod}}
\newcommand{\RR}{{\mathcal R}}
\newcommand{\Si}{\Sigma}
\newcommand{\Quad}{\operatorname{Quad}}
\newcommand{\SL}{\operatorname{SL}}
\newcommand{\G}{{\mathbb G}}
\newcommand{\lan}{\langle}
\newcommand{\ran}{\rangle}
\newcommand{\Coh}{\operatorname{Coh}}
\renewcommand{\P}{{\mathbb P}}
\newcommand{\si}{\sigma}
\newcommand{\ga}{\gamma}
\newcommand{\im}{\operatorname{im}}
\newcommand{\A}{{\mathbb A}}
\title{Derived equivalences of gentle algebras via Fukaya categories}
\author{Yank\i\ Lekili} 
\author{Alexander Polishchuk} 
\address{King's College London}
\address{University of Oregon, National Research University, Higher School of Economics Moscow, Russia, Korea Institute for Advanced Study, Seoul, South Korea}
\begin{document}

\begin{abstract} Following the approach of Haiden-Katzarkov-Kontsevich \cite{HKK}, 
to any homologically smooth $\Z$-graded gentle algebra $A$ we associate a triple 
$(\Sigma_A, \Lambda_A; \eta_A)$, where $\Sigma_A$ is an oriented smooth surface with non-empty boundary, 
$\Lambda_A$ is a set of stops on $\partial \Sigma_A$ and $\eta_A$ is a line field on $\Sigma_A$, 
such that the derived category of perfect dg-modules of $A$ is equivalent to the 
partially wrapped Fukaya category of $(\Sigma_A, \Lambda_A ;\eta_A)$. 
Modifying arguments of Johnson and Kawazumi, we classify the orbit 
decomposition of the action of the (symplectic) mapping class group of $\Sigma_A$ on the 
homotopy classes of line fields. 
As a result we obtain a sufficient criterion for homologically smooth graded gentle algebras to be derived equivalent. 
Our criterion uses numerical invariants generalizing those given by Avella-Alaminos-Geiss in \cite{AAG},
as well as some other numerical invariants. As an application, we find many new cases when the AAG-invariants
determine the derived Morita class. As another application, we establish some derived equivalences between
the stacky nodal curves considered in \cite{LP-pwrap}.
\end{abstract}

\maketitle

\section*{Introduction}

Given a Liouville manifold $(M, \omega =d \lambda)$, a rigorous definition of
the compact Fukaya category, $\mathcal{F}(M)$,  appears in the monograph
\cite{seidel}. This is a triangulated $A_\infty$-category linear over some base
ring $\mathbb{K}$.  Roughly speaking, the objects of $\mathcal{F}(M)$ are
compact, exact, oriented Lagrangian submanifolds in $M$, equipped with spin
structures (if $\mathrm{char} \mathbb{K} \neq 2$). The orientations on each
Lagrangian determine a $\mathbb{Z}_2$-grading on $\mathcal{F}(M)$, and the spin
structures enter in orienting the moduli spaces of holomorphic polygons that
enter into the definition of structure constants of the $A_\infty$ operations.
It is often convenient to upgrade the $\mathbb{Z}_2$-grading on
$\mathcal{F}(M)$ to a $\mathbb{Z}$-grading, which can be done under the
additional assumption that $2c_1(M)=0$ (see \cite{kontsevich}, \cite{graded}).
Under this assumption, one defines a notion of a \emph{grading structure} on
$M$, and correspondingly considers only \emph{graded Lagrangians} as objects of
$\mathcal{F}(M)$, which now becomes a $\mathbb{Z}$-graded category. We refer to
\cite{graded} for these general notions. In this paper, we focus our attention
to the case where $M = \Sigma$ is a punctured (real) 2-dimensional surface,
equipped with an area form. A grading structure on $\Sigma$ can be concretely
described as a homotopy class of a section $\eta$ of the projectivized tangent bundle of
$\mathbb{P}(T\Sigma)$. Note that there is an effective $H^1(\Sigma)$'s worth of choices (see Sec. \ref{linefields}). 
A Lagrangian can be graded if the
winding number of $\eta$ along $L$ vanishes, and in such a situation a grading
is a choice of a homotopy from the tangent lift $L \to TL \subset T\Sigma$ to
$\eta_{|L}$ along $L$. These gradings extend in a straightforward
manner to the wrapped Fukaya category $\mathcal{W}(\Sigma)$ which contains
$\mathcal{F}(\Sigma)$ as a full subcategory, but also allows non-compact
Lagrangians in $\Sigma$ and more generally, partially wrapped category
$\mathcal{W}(\Sigma, \Lambda)$, as studied in \cite[Sec. 2.1]{HKK}, where
$\Sigma$ is a surface with boundary and $\Lambda$ is a collection of stops (i.e., marked points) on
$\partial \Sigma$.  

By a {\it graded surface} $(\Si,\La;\eta)$ we mean an oriented surface with boundary $\Si$, together with a set $\La$ of
marked points on the boundary and a line field $\eta$.
Given two graded surfaces with stops, $(\Sigma_i, \Lambda_i; \eta_i)$ for $i=1,2$, 
and a homeomorphism $\phi: \Sigma_1 \to \Sigma_2$, such that 
$\phi(\Lambda_1)=\Lambda_2$, and $\phi_*(\eta_1)$ is homotopic to $\eta_2$ (we refer to such homeomorphisms
as {\it graded}), 
one gets an equivalence between the partially wrapped Fukaya categories 
$\mathcal{W}(\Sigma_1, \Lambda_1;\eta_1)$ and $\mathcal{W}(\Sigma_2, \Lambda_2; \eta_2)$. 
Thus, it is important to have a set of explicit computable invariants of a line field $\eta$
on a surface with boundary that determine the orbit of $\eta$ under
the action of the mapping class group of $\Sigma$.
Our first result (see Theorem \ref{lineorbit}) gives such invariants in terms of winding numbers of $\eta$.
In the most interesting case when genus is $\ge 2$, 
the invariants consist of the winding numbers along all the boundary components,
plus two more invariants, each taking values $0$ and $1$. The first of them 
decides whether the line field $\eta$ is induced by a non-vanishing vector field, while the second is the Arf-invariant of a certain
quadratic form over $\Z_2$. The cases of genus $1$ and $0$ are special due to the special nature of the corresponding
mapping class groups. In the case of genus $1$ there is a certain $\Z$-valued invariant in addition to the winding numbers
along boundary components.
Note that from the numerical invariants of Theorem \ref{lineorbit} one can also recover the genus of
the surface and the numbers of stops on the boundary components, so if these invariants match then the corresponding partially wrapped Fukaya categories are equivalent.

Next, we use this result to construct derived equivalences between {\it gentle algebras}, introduced by Assem and Skowr\'onski in \cite{AS}. This is a remarkable class algebras with monomial quadratic relations of special kind with a well understood
structure of indecomposable modules. Furthermore, their derived categories of modules also enjoy many nice properties 
(see \cite{BD-gentle} and references therein). 
Avella-Alaminos and Geiss \cite{AAG} gave a combinatorial definition of derived invariants
of finite-dimensional gentle algebras, which form a collection of pairs of non-negative integers $(m,n)$ with multiplicities.
We refer to these as {\it AAG-invariants}.
It is known that these invariants do not completely determine the derived Morita class of a gentle algebra in general (for example, see \cite{amiot}).

We consider $\Z$-graded gentle algebras and their perfect derived categories (the classical case corresponds to algebras concentrated in degree $0$). 
For such an algebra $A$, 
we denote by $D(A)$ the perfect derived category of dg-modules over 
$A$ viewed as a dg-algebra with zero differential. The category $D(A)$ has a natural dg-enhancement which we take into
account when talking about equivalences involving $D(A)$.

The connection between graded gentle algebras and Fukaya categories was established 
by Haiden, Katzarkov and Kontsevich in \cite{HKK} (cf. \cite{bocklandt}): they constructed collections of formal generators in some partially
wrapped Fukaya categories whose endomorphism algebras are graded gentle algebras.
In Theorem \ref{gentle-Fuk-thm} we give an inverse construction\footnote{The existence of such construction 
is mentioned in \cite{HKK}}: 
starting from a homologically smooth graded gentle algebra $A$ we construct a graded surface with stops
$(\Sigma_A, \Lambda_A; \eta_A)$ together with a set formal generators whose endomorphism algebra is
isomorphic to $A$. This leads to an equivalence of the partially wrapped Fukaya category $\mathcal{W}(\Sigma, \Lambda)$ with the derived category $D(A)$.
In addition, we generalize the combinatorial definition of AAG-invariants to possibly infinite-dimensional
graded gentle algebras and show
that they can be recovered from the winding numbers of $\eta_A$ along all boundary components.

Now recalling our numerical invariants of graded surfaces with stops from Theorem \ref{lineorbit} we obtain a sufficient
criterion for derived equivalence between homologically smooth graded gentle algebras.
Namely, if we start with two such algebras $A$ and $A'$ and find that 
the corresponding invariants from Theorem \ref{lineorbit}, determined by winding numbers of $\eta_A$ and $\eta_{A'}$, coincide
then we get a derived equivalence between $A$ and $A'$. 
More precisely, the first step is to check that $A$ and $A'$ have the same AAG-invariants.
In the case of genus $0$, this suffices. For genus $1$, one has to compute a certain invariant with values in $\Z_{\ge 0}$,
while for genus $>1$ one has to check that two invariants
with values in $\{0,1\}$ match. Note that the genus can be computed from the AAG-invariants.

As an application, using the above approach we obtain a sufficient criterion for derived
equivalence of homologically smooth graded gentle algebras
given purely in terms of AAG-invariants (see Corollary \ref{AAG-cor}).
Using Koszul duality, we also get a sufficient criterion
for derived equivalence of finite-dimensional gentle algebras with grading in degree $0$ 
(see Corollary \ref{AAG-fd-cor}).

In a different direction, we construct derived equivalences between stacky nodal curves studied in \cite{LP-pwrap}. Namely, these are either chains or rings of weighted projective lines glued to form stacky nodes,
locally modelled by quotients $(xy=0)/(x,y)\sim (\zeta^k x,\zeta y)$, where $\zeta^r=1$ and $k\in (\Z/r)^*$. 
In \cite[Thm.\ B]{LP-pwrap} we constructed an equivalence of the derived category of coherent sheaves on
such a stacky curve with the partially wrapped Fukaya category of some graded surface with stops (this can be
viewed as an instance of homological mirror symmetry).
Thus, using Theorem \ref{lineorbit} we get many nontrivial derived equivalences between our stacky curves.
In the case of balanced nodes (those with $k=-1$) we recover the equivalences between tcnc curves from \cite{sibilla}.

{\it Acknowledgments}. Y.L. is partially supported by the Royal Society (URF) and
the NSF grant DMS-1509141, and would like to thank Martin Kalck for pointing out the reference \cite{AAG}. 
A.P. is supported in part by the NSF grant DMS-1700642 and by the Russian Academic Excellence Project `5-100'. 
While working on this project, A.P. was visiting King's College London, Institut des Hautes Etudes Scientifiques, and Korea Institute for Advanced Study. He would like to thank these institutions for hospitality and excellent working conditions.

\section{Line fields on surfaces}
\label{linefields} 

\subsection{Basics on line fields}
Let $\Sigma$ be an oriented smooth surface of genus $g(\Sigma)$ with non-empty boundary with connected components $\partial{\Sigma} = \bigsqcup_{i=1}^d \partial_i \Sigma$. The pure mapping class group of $\Sigma$ is 
\[ \mathcal{M}(\Sigma) = \pi_0(Homeo^+ (\Sigma, \partial{\Sigma})),\]
where $Homeo^+(\Sigma,\partial{\Sigma})$ is the space of orientation preserving homeomorphism of $\Sigma$ which are the identity pointwise on $\partial \Sigma$. 
\begin{definition}
An (unoriented) line field $\eta$ on $\Sigma$ is a section of the projectivized tangent bundle $\P(T\Sigma)$. We denote by 
\[ G(\Sigma) = \pi_0(\Gamma (\Sigma, \P(T\Sigma))) \]
the set of homotopy classes of unoriented line fields. 
\end{definition}

A non-vanishing vector field gives a section of the tangent circle bundle $\mathbb{S}\Sigma$. Such a section induces a line field via the bundle map 
$\mathbb{S}\Sigma \to \mathbb{P}(T\Sigma)$ which is a fibrewise double covering. 
However, not all line fields come from
non-vanishing vector fields: a section of $\mathbb{P}(T\Sigma)$ may not lift to a section of $\mathbb{S}\Sigma$
(in Lemma \ref{even-wind-numbers-lem} below we will get a criterion for this).

The trivial circle fibration 
\begin{align}\label{circle-fibration-eq}
S^1 \xrightarrow{\iota} \P(T\Sigma) \xrightarrow{p} \Sigma 
\end{align}
induces an exact sequence
\begin{align} \label{circle-fibration-ex-seq} 0 \to H^1(\Sigma) \xrightarrow{p^*} H^1(\P(T\Sigma))\xrightarrow{\iota^*} H^1(S^1)\to 0
\end{align}
(here and below, when the coefficient group is omitted it is assumed to be $\Z$).
Note that the orientation on $\Sigma$ induces orientations on the tangent circles, so that the inclusion $\iota$ used in the above sequence is canonical up to homotopy.

We can think of line fields as trivializations of the circle fibration \eqref{circle-fibration-eq}, in particular, the  
set $G(\Sigma)$ has a natural structure of a torsor over
the group of homotopy classes of maps $\Sigma\to S^1$, i.e., with $H^1(\Sigma)$.
We denote the corresponding
action of $c\in H^1(\Sigma)$ on $G(\Sigma)$ by $\eta\mapsto\eta+c$,  
 
Let us associate with a line field $\eta$ the class $[\eta]\in H^1(\P(T\Sigma))$, such that $\iota^*[\eta]([S^1])=1$, by taking the Poincar\'e-Lefschetz dual of the class of the image $[\eta(\Sigma)] \subset H_2(\P(T\Sigma), \partial \P(T\Sigma))$. 

\begin{lem} The map $\eta\mapsto [\eta]$
gives an identification
\[ G(\Sigma) = (\iota^*)^{-1}(\zeta) \subset H^1(\P(T\Sigma)),\]
where $\zeta \in H^1(S^1)$ is the generator which integrates to 1 along $S^1$. 
\end{lem}

\begin{proof} The exact sequence \eqref{circle-fibration-ex-seq} shows that
set $(\iota^*)^{-1}(\zeta)$ is a torsor over $H^1(\Sigma)$. It is easy to check that the map $\eta\mapsto [\eta]$
is compatible with the $H^1(\Sigma)$-actions, i.e.,
$$[\eta+c]=[\eta]+p^*c.$$
The assertion follows immediately from this.
\end{proof}

The mapping class group $\mathcal{M}(\Sigma)$ acts on $G(\Sigma)$ on the right. Our goal in this section is to understand the orbit decomposition of $G(\Sigma)$ with respect to this action. 

Given an immersed curve $\gamma : S^1 \to \Sigma$, one can consider its tangent lift $\tilde{\gamma}: S^1 \to \P(T\Sigma)$ given by $(\gamma, T\gamma)$, where $T\gamma$ is the tangent space to the curve $\gamma$. 

\begin{definition} Given a line field $\eta$ and an immersed curve $\gamma$, define the winding number of $\gamma$ with respect to $\eta$ to be \[ w_\eta(\gamma) := \langle [\eta], [\tilde{\gamma}] \rangle, \]
	where $\langle\ , \ \rangle :H^1(\P(T\Sigma)) \times H_1(\P(T\Sigma)) \to \Z$ is the natural pairing.
\end{definition}

The winding number $w_\eta(\gamma)$ with respect to $\eta$ only depends on the homotopy class of $\eta$ 
and the regular homotopy class of $\gamma$. From the definition we immediately get the following compatibility with
the action of $H^1(\Sigma)$:
$$w_{\eta+c}(\gamma)=w_{\eta}(\gamma)+\lan c,[\gamma]\ran.$$

Throughout, $\partial\Sigma$ is oriented with respect to the natural orientation as the boundary of $\Sigma$. In particular, $w_\eta(\partial \mathbb{D}^2) =2$ for the unique homotopy class of line fields on $\mathbb{D}^2$. For a boundary component $B \subset \partial\Sigma$ with the opposite orientation, we write $-B$. Then, we have $w_\eta(-B) = -w_\eta(B)$.

Every nonvanishing vector field $v$ on $\Sigma$ defines naturally a line field. In this way we get a map
$$V(\Sigma)\to G(\Sigma): v\mapsto \lan v\ran$$
from the set of homotopy classes of nonvanishing vector fields $V(\Sigma)$. We can think of nonvanishing vector fields
as trivializations of the tangent circle bundle, so $V(\Sigma)$ has a natural action of the group of homotopy classes of maps
$\Sigma\to S^1$, i.e., of $H^1(\Sigma)$. It is easy to check that the above map is compatible with the $H^1(\Sigma)$-actions
via the multiplication by $2$:
$$\lan v+c\ran=\lan v\ran + 2c$$
for $c\in H^1(\Sigma)$.
Also, for any nonvanishing vector field $v$, the winding number of the corresponding line field $\lan v\ran$ along an immersed curve $\ga$
is related to the winding number of $v$ itself by
$$w_{\lan v\ran}(\ga)=2 w_v(\ga).$$

\begin{lem}\label{even-wind-numbers-lem} 
A line field $\eta$ comes from a vector field if an only if all of its winding numbers are even.
\end{lem}

\begin{proof} The ``only if" part is clear. Now let $\eta$ be a line field with even winding numbers and let
$v$ be some nonvanishing vector field (it exists since $\Sigma$ is noncompact). Then $\eta=\lan v\ran+c$
for some $c\in H^1(\Sigma)$ such that $\lan c,[\ga]\ran$ is even for every homology class $[\ga]$. But this
implies that $c=2c'$, so $\eta=\lan v+c'\ran$.
\end{proof}

\subsection{Invariants under the action of the mapping class group}
Recall that $\partial_i \Sigma$, $i=1,\ldots,d$ are the components of the boundary of $\Sigma$. 
Given a line field $\eta$, the winding numbers  
$$w_\eta(\partial_i \Sigma)\ \text{ for } i=1,\ldots d,$$
depend only on the homotopy class of $\eta$ and are 
invariant under the action of the mapping class group $\mathcal{M}(\Sigma)$.
This gives us the first set of invariants of elements of $G(\Sigma)$. 

To go further, we need to study the winding numbers along non-separating curves on $\Sigma$. 
As is well-known, the winding number invariants do not descend to a map from $H_1(\Sigma)$. Indeed, if $S \subset \Sigma$ is a compact subsurface with boundary $\partial S = \bigsqcup_{i=1}^d \partial_i S$, by Poincar\'e-Hopf index theorem (see \cite[Ch. 3]{hopf}), we have:
\begin{equation}\label{chi-winding-eq} 
\sum_{i=1}^d w_\eta ( \partial_i S) = 2\chi(S) 
\end{equation}
However, considering the reduction modulo $2$ we still get a well-defined homomorphism (see \cite{johnson}):
\[ [w_\eta]^{(2)}: H_1(\Sigma; \Z_2) \to \Z_2 \]
i.e an element $H^1(\Sigma; \Z_2)$.

%

\begin{definition} 
We define the $\Z_2$-valued invariant 
\begin{align*} \sigma: \G(\Sigma) &\to \Z_2 \\
	\eta &\mapsto \begin{cases} 0 \text{ if } [w_\eta]^{(2)}=0 \\ 
	1 \text{ otherwise } \end{cases} 
\end{align*}
\end{definition}

We have a natural map induced by the inclusion $\partial \Sigma\to \Sigma$, 
\[ i: H_1(\partial \Sigma;\Z_2) \cong \Z_2^d \to H_1(\Sigma;\Z_2) \cong \Z_2^{2g+d-1}. \]
Note that the image of $i$ is precisely the kernel of the intersection pairing on $H_1(\Sigma,\Z_2)$, and
the induced pairing on the cokernel of $i$ is non-degenerate. In fact, this cokernel is naturally isomorphic to
$H_1(\ov{\Sigma};\Z_2)\simeq \Z_2^{2g}$, where $\ov{\Sigma}$ is the surface without boundary obtained from $\Sigma$ by
capping off all the boundary components.

Note that the values of $[w_\eta]^{(2)}$ on the boundary cycles are given by $w_\eta(\partial_i \Sigma)$ modulo $2$.
Thus, if at least one of these numbers is odd then $\sigma(\eta)=1$.
If all the boundary winding numbers are even then we can check whether $\sigma(\eta)=0$ by looking at 
the winding numbers of a collection of cycles that form a basis in the homology of $\ov{\Sigma}$.


%

\begin{prop}\label{quad-from-line-prop} 
Suppose $\eta$ is a line field on $\Sigma$ defined by the class $[\eta] \in H^1(\P(T\Sigma))$. 
There is a well defined map	
\[ q_\eta: H_1(\Sigma;\Z_4) \to \Z_4 \]
given by
	\[ q_\eta ( \sum_{i=1}^m \alpha_i) = \sum_{i=1}^m w_\eta(\alpha_i) + 2m  \in \Z_4, \]
where $\alpha_i$ are simple closed curves.
It satisfies
	\[ q_\eta(a+b) = q_\eta(a) + q_\eta(b) + 2 (a \cdot b) \in \Z_4 \]
where $a,b \in H_1(\Sigma;\Z_4)$,
and $a \cdot b$ denotes the intersection pairing on $H_1(\Sigma;\Z_4)$.
\end{prop}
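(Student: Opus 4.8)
The plan is to first establish that $q_\eta$ is well defined on $H_1(\Sigma;\Z_4)$, and then derive the quadratic-form identity. The central difficulty is well-definedness: the proposed formula $q_\eta(\sum_i\alpha_i)=\sum_i w_\eta(\alpha_i)+2m$ is written in terms of a specific choice of a collection of simple closed curves $\alpha_i$ summing to a given class $a$, so I must show the answer is independent of this choice and descends to $\Z_4$-coefficients.

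\emph{Step 1 (reduction to genus and the role of the shift $2m$).} First I would record the basic inputs: winding number $w_\eta$ depends only on the regular homotopy class of the curve, $w_\eta(-\gamma)=-w_\eta(\gamma)$, and the Poincar\'e-Hopf relation \eqref{chi-winding-eq}. For a single simple closed curve $\alpha$, the summand $w_\eta(\alpha)+2$ is exactly $r(\eta)$-type data shifted so that a contractible curve (bounding a disc, $\chi=1$, $w_\eta=2$ by the convention $w_\eta(\partial\mathbb{D}^2)=2$) contributes $2+2\equiv0\pmod4$. This is the reason for the $+2m$ normalization: it makes the value on nullhomotopic curves vanish and will make $q_\eta$ insensitive to adding contractible loops. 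I expect this is the conceptual key that forces the $\Z_4$ target.

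\emph{Step 2 (well-definedness via crossing and handle-slide moves).} The heart of the argument is to show that if $\sum_i\alpha_i$ and $\sum_j\beta_j$ represent the same class in $H_1(\Sigma;\Z_4)$, then the two formula-values agree. I would reduce any two representing collections to a common one using a finite list of moves: (i) isotopy, which preserves each $w_\eta(\alpha_i)$; (ii) a smoothing/crossing-change move at a transverse intersection point of two curves, replacing two curves $a,b$ by their oriented resolution $a+b$ realized as simple closed curves; and (iii) deletion of a nullhomotopic component. For move (ii) the key local computation is that resolving a single transverse crossing changes the total winding number by $\pm1$ per endpoint of the smoothing and changes the count $m$ by $1$, and assembling these over the two intersection points created in a smoothing yields precisely the correction $2(a\cdot b)$ modulo $4$. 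Reduction mod $4$ is what kills the sign ambiguities in the local crossing contributions (each crossing contributes $\pm2$, and $+2\equiv-2\pmod4$), so the net change depends only on the mod-$2$ intersection number lifted to the integer $a\cdot b$ read mod $2$, giving a consistent answer in $\Z_4$. This step is the main obstacle: one must check that every relation in $H_1(\Sigma;\Z_4)$ — in particular the relation $4\alpha=0$ for each generator and the boundary relations coming from \eqref{chi-winding-eq} mod $4$ — is respected by the formula. For the boundary relations I would invoke \eqref{chi-winding-eq}: for a subsurface $S$ one has $\sum_i w_\eta(\partial_iS)=2\chi(S)$, so $\sum_i(w_\eta(\partial_iS)+2)=2\chi(S)+2n\equiv0\pmod4$ whenever the number of boundary components $n\equiv\chi(S)\pmod2$, which holds for orientable $S$ since $\chi(S)=2-2g(S)-n$.

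\emph{Step 3 (the quadratic identity).} Once $q_\eta$ is well defined, the identity $q_\eta(a+b)=q_\eta(a)+q_\eta(b)+2(a\cdot b)$ follows by representing $a$ and $b$ by collections of simple closed curves in transverse position and comparing the value of $q_\eta$ on the merged collection representing $a+b$ (obtained by oriented smoothing at the $a\cdot b$ intersection points) against the sum of the separate values. The discrepancy is exactly the accumulated crossing correction computed in Step 2, namely $2(a\cdot b)\in\Z_4$. I would then note that since $w_\eta$ is additive over disjoint curves and the shift $2m$ is additive in $m$, the only nontrivial contribution is the intersection term, giving the stated formula. This realizes $q_\eta$ as a $\Z_4$-valued quadratic refinement of the mod-$2$ intersection form, whose associated bilinear form is $2(a\cdot b)$, consistent with $\sigma(\eta)$ and the boundary invariants computed earlier.
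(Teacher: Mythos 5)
Your Step 2 contains a concrete error in the key local computation. When you perform an oriented smoothing of a single transverse crossing between two embedded curves, the total winding number does \emph{not} change by $\pm 2$: the two smoothed corners contribute opposite turnings (if one corner turns the tangent by $+\alpha$, the other turns it by $-\alpha$), so they cancel and $w_\eta(a\# b)=w_\eta(a)+w_\eta(b)$ exactly. The entire correction term $2(a\cdot b)$ comes from the other effect you mention, namely that each smoothing changes the number of components $m$ by $\pm 1$, so the normalization term $2m$ changes by $\pm 2\equiv 2 \pmod 4$ per crossing, giving $2\,|a\cap b|\equiv 2(a\cdot b)\bmod 4$ in total. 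With your accounting ($\pm 2\equiv 2$ from the winding \emph{and} $2$ from the $m$-shift) each crossing would contribute $4\equiv 0\pmod 4$, the identity would degenerate to $q_\eta(a+b)=q_\eta(a)+q_\eta(b)$, and the proposition (which asserts a genuinely quadratic, not linear, function) would fail. Separately, your well-definedness argument has a real gap: you assert that isotopy, crossing resolution, and deletion of nullhomotopic components connect any two collections of simple closed curves representing the same class in $H_1(\Sigma;\Z_4)$, and that it suffices to check the relations $4\alpha=0$ and the subsurface-boundary relations via \eqref{chi-winding-eq}. That sufficiency claim is precisely the hard content (it is the mod-$4$ analogue of the main step in Johnson's paper), and you give no proof of it; mod-$4$ relations among classes of intersecting curves are not obviously generated by the relations you check.

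For comparison, the paper's proof avoids all of this by a reduction: if $\eta$ comes from a nonvanishing vector field $v$, then $w_\eta=2w_v$, so $q_\eta=2q_v$ where $q_v(\gamma)=w_v(\gamma)+1\bmod 2$ is Johnson's $\Z_2$-valued quadratic form, and both well-definedness and the identity follow from \cite[Thm 1A, Thm 1B]{johnson}; for a general line field one writes $[\eta]=\eta_0+c$ with $c\in H^1(\Sigma)$ and $\eta_0$ induced by a vector field, and sets $q_\eta=q_{\eta_0}+\langle c,\cdot\rangle$, which is well defined and changes the function only by a linear term, preserving the quadratic property. If you want to keep your direct topological route, you must (i) correct the crossing computation as above and (ii) actually prove the moves/relations lemma — at which point you will essentially have reproved Johnson's theorem in the mod-$4$ setting rather than quoting it.
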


\begin{proof} In the case when $\eta$ comes from a non-vanishing vector field $v$, we have
$w_\eta(a)=2w_v(a)$, where $w_v(\cdot)$ is the winding number of the vector field.
Hence, the assertion in this case follows from \cite[Thm 1A, Thm 1B]{johnson}.
In general, we have $\eta=\eta_0+c$, where $\eta_0$ comes from a non-vanishing vector field (which exists because $\Sigma$ is non-compact) and $c$ is a class in $H^1(\Sigma)$.
Thus, the function $q_\eta(a):=q_{\eta_0}(a)+\lan c,a\ran$ has the claimed properties.
\end{proof}

\begin{lem}\label{red-mod4-lem} Suppose that $g(\Sigma) \geq 2$. Assume that line fields $\eta$ and $\theta$ have
$w_\eta(\partial_i \Sigma)=w_\theta(\partial_i \Sigma)$ for $i=1,\ldots,d$, and $q_\eta=q_\theta$. Then their homotopy classes 
lie in the same $\MM(\Sigma)$-orbit.
\end{lem}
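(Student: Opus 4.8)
The plan is to reduce the statement to a divisibility computation on the torsor $G(\Sigma)$ and then realize the required shift by an explicit product of bounding pair maps. Since $G(\Sigma)$ is a torsor over $H^1(\Sigma)$, I would write $\theta = \eta + c$ for a unique $c \in H^1(\Sigma;\Z)$ and translate the two hypotheses into constraints on $c$. For a simple closed curve $\gamma$ the identity $q_\eta(\gamma) = w_\eta(\gamma) + 2$ together with $w_{\eta+c}(\gamma) = w_\eta(\gamma) + \langle c, [\gamma]\rangle$ shows that $q_\eta = q_\theta$ forces $\langle c, [\gamma]\rangle \equiv 0 \pmod 4$ for every simple closed curve; as primitive classes are represented by simple closed curves and generate $H_1(\Sigma;\Z)$, this gives $c \in 4\,H^1(\Sigma;\Z)$, say $c = 4c_0$. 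The boundary hypothesis $r_i(\eta) = r_i(\theta)$ gives $\langle c, [\partial_i\Sigma]\rangle = 0$ exactly, hence $\langle c_0, [\partial_i\Sigma]\rangle = 0$, so $c_0$ annihilates all boundary classes. Since the radical of the intersection form on $H_1(\Sigma;\Z)$ is precisely the span of the boundary classes and splits off as a direct summand, the duality map $D\colon H_1(\Sigma)\to H^1(\Sigma)$, $\langle D[\alpha],\delta\rangle = \alpha\cdot\delta$, has image equal to the annihilator of the boundary classes; thus $c_0 = D[\beta]$ for some $\beta\in H_1(\Sigma)$, and $c = 4\,D[\beta]$.

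The second ingredient is the effect of a bounding pair map on a line field. If $\gamma_1, \gamma_2$ are disjoint homologous simple closed curves cobounding a subsurface $S$ with $[\gamma_1]=[\gamma_2]=\alpha$, then applying the Dehn twist formula $w_{T_\gamma\cdot\eta}(\delta) = w_\eta(\delta) + (\gamma\cdot\delta)\, w_\eta(\gamma)$ to $\phi = T_{\gamma_1}T_{\gamma_2}^{-1}$ and using $\gamma_1\cdot\gamma_2 = 0$ yields $w_{\phi\cdot\eta}(\delta) = w_\eta(\delta) + (\alpha\cdot\delta)\,(w_\eta(\gamma_1) - w_\eta(\gamma_2))$. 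By the Poincar\'e--Hopf relation \eqref{chi-winding-eq} applied to $S$ one has $w_\eta(\gamma_1) - w_\eta(\gamma_2) = 2\chi(S)$, so $\phi\cdot\eta = \eta + 2\chi(S)\,D[\alpha]$. Taking $S$ of genus $1$ with two boundary circles gives $\chi(S) = -2$ and hence a shift by $-4\,D[\alpha]$. Since bounding pair maps lie in the Torelli group, they act trivially on $H^1(\Sigma)$, so for a product of such maps the individual shifts simply add; this additivity of the ``translation part'' holds precisely for homologically trivial mapping classes and is what lets one combine shifts freely.

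To finish I would expand $\beta$ in a symplectic basis $a_1,b_1,\dots,a_g,b_g$ (the boundary part of $\beta$ lies in $\ker D$ and may be dropped), so $\beta = \sum_j n_j\,\alpha_j$ with each $\alpha_j\in\{a_i,b_i\}$ primitive and non-separating. For $g(\Sigma)\ge 2$ each such $\alpha_j$ is the class of one curve of a genus-$1$ bounding pair --- this is exactly where the hypothesis $g\ge 2$ enters, since the complementary subsurface must still carry positive genus --- so there is a bounding pair map $\psi_j$ with $\psi_j\cdot\eta = \eta - 4\,D[\alpha_j]$. Then $\phi := \prod_j \psi_j^{-n_j}$ is a product of Torelli elements whose total shift is $\sum_j n_j\,4\,D[\alpha_j] = 4\,D[\beta] = c$, whence $\phi\cdot\eta = \eta + c = \theta$, proving that $\eta$ and $\theta$ lie in the same $\MM(\Sigma)$-orbit.

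The main obstacle I anticipate is bookkeeping rather than conceptual: verifying the bounding pair shift formula cleanly (including signs and the exact value $2\chi(S)$ extracted from \eqref{chi-winding-eq}), confirming that over $\Z$ the image of $D$ is genuinely the full annihilator of the boundary classes (so that $c_0 = D[\beta]$ holds integrally and not merely up to finite index), and checking that the geometric bounding pairs realizing each basis class $a_i,b_i$ can indeed be chosen to cobound a genus-$1$ subsurface once $g\ge 2$. With these verified, the additivity of shifts for products of Torelli elements makes the final assembly immediate.
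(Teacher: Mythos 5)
Your proof is correct and follows essentially the same route as the paper's: the first half (deducing $\theta=\eta+4c_0$ from $q_\eta=q_\theta$, then using $r_i(\eta)=r_i(\theta)$ to see that $c_0$ kills the boundary classes and hence equals $D[\beta]$ for some $\beta\in H_1(\Sigma)$) is the paper's argument verbatim, and your realization of the shift $4D[\beta]$ by genus-one bounding-pair maps --- via the Dehn-twist action on winding numbers, the Poincar\'e--Hopf identity \eqref{chi-winding-eq}, and the constancy of the shift for homologically trivial mapping classes --- is precisely the mechanism of the proof of Kawazumi's Theorem 2.5, which the paper cites at this point instead of reproducing. In effect you have written out in a self-contained way the step the paper delegates to \cite{kawazumi}.
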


\begin{proof} The assumption $q_\eta=q_\theta$ implies that $w_\eta(a)\equiv w_\theta(a)\mod 4$ for any $a\in H_1(\Sigma)$.
Thus, we have $\theta=\eta+4c$ for some $c\in H^1(\Sigma)$. Furthermore, the condition 
$w_\eta(\partial_i \Sigma)=w_\theta(\partial_i \Sigma)$
implies that $c$ has zero restriction to $H_1(\partial \Sigma)$. Hence, there exists $\alpha\in H_1(\Sigma)$,
such that $\lan c,\gamma\ran=(\alpha\cdot\gamma)$ for any $\gamma\in H_1(\Sigma)$.
Now the fact that $\eta$ and $\theta$ lie in the same $\MM(\Sigma)$-orbit is proved in exactly the same way
as in the proof of \cite[Thm.\ 2.5]{kawazumi}. Namely, for each standard generator of the homology,
$\a$, one can construct an explicit element in the mapping class $f_\a$ (expressed in terms of Dehn twists along certain curves related to $\alpha$) such that
the action of $f_\a$ on a line field has the same effect as adding the class dual to $4\a$.
\end{proof}

Thus, for $g(\Sigma)\ge 2$, the study of the $\MM(\Sigma)$-orbits on $G(\Sigma)$ reduces to the study of $\MM(\Sigma)$-orbits on 
the set of functions 
$q:H_1(\Sigma,\Z_4)\to \Z_4$ satisfying
\begin{equation}\label{quadratic-main-eq}
q(a+b)=q(a)+q(b)+ 2 (a \cdot b).
\end{equation}
Let us denote by $\Quad_4=\Quad_4(\Sigma)$ the set of all such functions (it is an $H^1(\Sigma,\Z_4)$-torsor).

Recall that given a symplectic vector space $V, (- \cdot -)$ over $\Z_2$, one can consider the set $\Quad(V)$ of quadratic forms
$\ov{q}:V\to \Z_2$ satisfying 
\begin{equation}\label{quadratic-mod2-eq}
\ov{q}(x+y)=\ov{q}(x)+\ov{q}(y)+(x\cdot y).
\end{equation}
For every $\ov{q}\in\Quad(V)$, the Arf-invariant (\cite{arf},\cite{dickson}) is the element of $\Z_2$ given by
$$A(\ov{q})=\sum_{i=1}^n \ov{q}(a_i)\ov{q}(b_i),$$
where $(a_i,b_i)$ is a symplectic basis of $V$. The Arf invariant is the value that
$\ov{q}$ attains on the majority of vectors in $V$.

In the case when $w_\eta(\partial_i \Sigma)\equiv 2 \mod 4$ for every $i=1,\ldots,d$, and 
the quadratic function $q=q_\eta$ takes values in $2\Z_4$, we can associate to $q$ an element
in $\Quad(H_1(\ov{\Si},\Z_2))$ whose Arf-invariant will give us an additional
invariant of $\eta$ modulo the mapping class group action.

Namely, it is easy to see that if $q\in\Quad_4$ takes values in $2\Z_4$ then 
we have a well defined function $q/2:H_1(\Sigma,\Z_2)\to \Z_2$ satisfying \eqref{quadratic-mod2-eq}
such that $q=2\cdot q/2$. Now the condition $w_\eta(\partial_i\Sigma)\equiv 2\mod 4$ is equivalent to
$q/2(\partial_i\Sigma)=0$, so this is precisely the condition for the quadratic function $q/2$ to descend
to a form $\ov{q}$ in $\Quad(H_1(\ov{\Si},\Z_2))$ (recall that $H_1(\ov{\Si},\Z_2)$ is the quotient of
$H_1(\Si,\Z_2)$ by the boundary classes). 

Thus, in the case when $\si(\eta)=0$ and $w_\eta(\partial_i \Sigma)\equiv 2 \mod 4$ for every $i=1,\ldots,d$,
we can apply the above construction to $q_\eta$ and define the quadratic form $\ov{q}_\eta$ in
$\Quad(H_1(\ov{\Si},\Z_2))$. In this case we set
$$A(\eta):=A(\ov{q}_\eta).$$

In the case $g(\Sigma)=1$ we will use a different invariant of a line field, $\wt{A}(\eta)$, defined by
\begin{equation}
\label{genus-1-line-field-inv}
\wt{A}(\eta):= \gcd( \{ w_\eta(\alpha),w_\eta(\beta),w_\eta(\partial_1\Sigma)+2,\ldots,w_\eta(\partial_d \Sigma)+2\}),
\end{equation}
where $\alpha,\beta$ are non-separating curves in $\Sigma$
such that $[\alpha]$ and $[\beta]$ project to a basis of $H_1(\Sigma)/\im(i_*)$.
It can be shown as in \cite[Lemma 2.6]{kawazumi} that 
\[\wt{A}(\eta) = \gcd(\{ w_\eta(\gamma) : \gamma \text{\ non-separating\ }\}) \]
which implies that $\wt{A}(\cdot)$ is indeed invariant under the mapping class group.
We also note that in the case $d=1$, $w_\eta(\partial\Sigma)=-2$, hence this invariant reduces to $\gcd(w_\eta(\alpha), w_\eta(\beta))$ considered in \cite{amiot}.



\begin{thm} \label{lineorbit} \begin{itemize} \item[(i)] Suppose $g(\Sigma)=0$. Then the action of 
$\mathcal{M}(\Sigma)$ on $G(\Sigma)$ is trivial. Moreover, two line fields $\eta$ and $\theta$ are homotopic if and only if
	\[ w_\eta( \partial_i \Sigma) = w_\theta (\partial_i \Sigma) \ \ \text{\ for all\ } i=1,\ldots d. \] 
\item[(ii)] Suppose $g(\Sigma)=1$. Then two line fields $\eta$ and $\theta$ are in the same 
$\mathcal{M}(\Sigma)$-orbit if and only if 	
\[ w_\eta( \partial_i \Sigma) = w_\theta (\partial_i \Sigma) \ \ \text{\ for all\ } i=1,\ldots d. \] 
	and 
\[ \wt{A}(\eta)=\wt{A}(\theta)\in \Z_{\ge 0},\]
where $\wt{A}(\eta)$ is defined by \eqref{genus-1-line-field-inv}.
\item[(iii)] Suppose $g(\Sigma)\geq 2$. Then two line fields $\eta$ and $\theta$ are in the same 
$\mathcal{M}(\Sigma)$ orbit if and only if the following conditions are satisfied:
\begin{enumerate}
\item $w_\eta( \partial_i \Sigma) = w_\theta (\partial_i \Sigma) \ \ \text{\ for all\ } i=1,\ldots d;$ 
\item $\sigma(\eta)=\sigma(\theta)$ 
(this only needs to be checked if all $w_\eta( \partial_i \Sigma)$ are even);
\item
if $w_\eta(\partial_i \Sigma) = w_\theta (\partial_i \Sigma) \in 2+4 \mathbb{Z}$ and $\sigma(\eta)=\sigma(\theta)=0$
then additionally one must have 
		\[ A(\eta) = A(\theta), \]
		where $A$ is an Arf invariant defined above.
\end{enumerate}

\end{itemize}
\end{thm}

\begin{proof} (i) This follows immediately from the fact that $G(\Sigma)$ is an $H^1(\Sigma)$-torsor and the boundary curves $\partial_i \Sigma$ generate the group $H_1(\Sigma)$.

\noindent
(ii) This is proved in the same way as Theorem 2.8 in \cite{kawazumi}. The main idea is to use the fact that for the standard
choice of simple curves $\a$ and $\b$, the Dehn twists with respect to $\a$ and $\b$ generate an action of 
$\SL_2(\Z)$ on the pair $(w_{\eta}(\a),w_{\eta}(\b))$ (one also uses some other Dehn twists, as in the proof of
\cite[Thm.\ 2.8]{kawazumi}).

\noindent
(iii) We need to prove that if the invariants match then $\eta$ and $\theta$ are in the same $\MM(\Sigma)$-orbit.
Note that $\sigma(\eta)$ is determined by whether the quadratic function $q_\eta$ is trivial modulo $2$ or not.
By Lemma \ref{red-mod4-lem}, it is enough to prove that the quadratic functions $q_\eta$ and $q_\theta$
are in the same $\MM(\Sigma)$-orbit. 

First, let us analyze the result of the action of a transvection
$$T_a(x)=x+(a\cdot x)a$$
on quadratic functions in $\Quad_4$. Note that all such transvections can be realized by elements of the
mapping class group: if the class $a$ is not divisible by $2$ then we can lift it to a primitive element of the homology,
and hence, $T_a$ is realized by some Dehn twist. On the other hand, if $a$ is divisible by $2$ then $T_a=\id$.

We have
\begin{equation}\label{transvection-eq}
q(T_a(x))=q(x)+(a\cdot x)q(a)+2(a\cdot x)(x\cdot a)=q(x)+(q(a)+2)(a\cdot x).
\end{equation}
In particular, if $q(a)=-1$ then $q(T_a(x))=q(x)+ (a\cdot x)$.

Let us set 
$$H:=H_1(\Sigma,\Z_4), \  K=\im(i_*:H_1(\partial\Sigma,\Z_4)\to H_1(\Sigma,\Z_4)).$$

If $q,q'\in\Quad_4$ have $q|_K=q'|_K$ then $(q'-q)$ is a homomorphism $H\to \Z_4$, vanishing on $K$, hence it has form
$x\mapsto (a\cdot x)$ for some $a\in H$.

Assume now that $q\in\Quad_4$ is such that $q|_K$ is surjective, i.e., the reduction of $q|_K$ modulo $2$
is nonzero. Then we claim that any $q'\in\Quad_4$ with $q'|_K=q|_K$ lies in the $\MM(\Sigma)$-orbit of $q$.
Indeed, we have $q'(x)-q(x)=(a\cdot x)$ for some $a\in H$. By surjectivity of $q|_K$ we can find $k\in K$ such that 
$q(k)=-1-q(a)$, i.e., $q(a+k)=-1$. Then from \eqref{transvection-eq} we get
$$q T_{a+k}=q'.$$

Next, let us consider $q\in\Quad_4$ such that $q|_K$ takes values in $2\Z_4$. 
Assume also that $q\mod 2\neq 0$. We claim that 
in this case the $\MM(\Sigma)$-orbit of $q$ is determined by $q|_K$. 
Note that $q\mod 2$ is a homomorphism $H\to \Z_2$ trivial on $K$,
so it is an element of $\Hom(H/K,\Z_2)$. Since $\MM(\Sigma)$ acts transitively on nonzero elements in
$\Hom(H/K,\Z_2)$, it is enough to prove that if $q'\equiv q\mod 2$ and $q'|_K=q|_K$ 
then $q'$ and $q$ are in the same $\MM(\Sigma)$-orbit.
As before we deduce that $q'(x)-q(x)=2(a\cdot x)$ for some $a\in H$. 
If $q(a)\equiv 1\mod 2$ then this immediately gives $q'=qT_a^2$.
On the other hand, if $q'(a)\equiv q(a)\equiv 0\mod 2$ then for any element $b$ with $q(b)\equiv 1\mod 2$
we have
$$qT_{a+b}^2(x)=q(x)+2((a+b)\cdot x)=q'(x)+2(b\cdot x)=q'T_b^2(x),$$
so $q'$ and $q$ are in the same orbit.

Finally, if $q$ takes values in $2\Z_4$ then we have $q=2\cdot q/2$ for a quadratic form $q/2$ on $H/2H$
satisfying \eqref{quadratic-mod2-eq}, and we can use the description of $\MM(\Sigma)$-orbits on such forms
from \cite[Thm.\ 1.3]{kawazumi} (based on the work of Johnson \cite{johnson}).
\end{proof}

\begin{rmk} 
1. It follows from \eqref{chi-winding-eq} that 
the genus of the surface is determined by the boundary winding numbers of $\eta$ via
the formula
\begin{equation}\label{genus-formula}
4-4g(\Sigma)=\sum_{i=1}^d (w_\eta(\partial_i\Sigma)+2).
\end{equation}

\noindent
2. In the case $\sigma(\eta)=0$, the line field $\eta$ is induced by a non-vanishing vector field $v$
(see Lemma \ref{even-wind-numbers-lem}). This induces a spin structure on the surface $\Sigma$ (by considering its mod 2 reduction). The condition that $w_\eta(\partial_i\Sigma)\equiv 2\mod 4$,
for $i=1,\ldots,d$, means that this spin structure extends to a spin structure on the compact surface obtained from $\Sigma$ by capping off the boundaries with a disk. Now, it is a theorem of Atiyah \cite{atiyah} (see also \cite{johnson}) that the action of the mapping class group on the spin structures on a compact Riemann surface has exactly 2 orbits distinguished by the Arf invariant.
\end{rmk}

Theorem \ref{lineorbit} can be used to get a criterion for
a homeomorphism between two {\it different} graded surfaces.

\begin{cor}\label{lineorbit-cor}
Let $(\Si_1,\La_1;\eta_1)$ and $(\Si_2,\La_2;\eta_2)$ be graded surfaces (where $\La_i$ are sets of marked points
on the boundary and $\eta_i$ are line fields) with the same number of boundary components $d$. 
Then there exists an orientation preserving homeomorphism 
$\phi:\Si_1\to\Si_2$ such that $\phi(\La_1)=\La_2$ and $\phi_*(\eta_1)$ is homotopic to $\eta_2$ if and only if
there exists a numbering of boundary components on $\Si_1$ and $\Si_2$ such that
for each $i=1,\ldots,d$, one has
$$\#(\La_1\cap\partial_i\Si_1)=\#(\La_2\cap\partial_i\Si_2),$$
$$w_{\eta_1}(\partial_i\Si_1)=w_{\eta_2}(\partial_i\Si_2),$$
and in addition,
\begin{itemize}
\item if $g(\Si_1)=g(\Si_2)=1$ then $\wt{A}(\eta_1)=\wt{A}(\eta_2)$;
\item if $g(\Si_1)=g(\Si_2)\ge 2$ then $\si(\eta_1)=\si(\eta_2)$ and $A(\eta_1)=A(\eta_2)$ whenever
the latter two invariants are defined.
\end{itemize} 
\end{cor}

\begin{proof} The ``only if" part is clear. For the ``if" part, since $g(\Si_1)=g(\Si_2)$ (due to \eqref{genus-formula}),
we can find a homeomorphism $\wt{\phi}:\Si_1\to\Si_2$ sending $\partial_i\Si_1$ to $\partial_i\Si_2$ and
$\La_1$ to $\La_2$. Applying Theorem \ref{lineorbit} to $\wt{\phi}_*\eta_1$ and $\eta_2$ we deduce the existence
of an element $\psi\in\MM(\Si_2)$ such that $\psi_*(\wt{\phi}_*\eta_1)=\eta_2$.
Thus, the homeomorphism $\phi=\psi \circ \wt{\phi}:\Si_1\to \Si_2$ has the required properties.
\end{proof}

\section{Partially wrapped Fukaya categories}
\label{section2} 

The partially wrapped Fukaya category $\mathcal{W}(\Sigma, \Lambda; \eta)$ (with coefficients in a field $\K$) 
is associated to a 
graded surface $(\Sigma, \Lambda; \eta)$, where $\Sigma$ is a connected
compact surface with non-empty boundary $\partial \Sigma$, $\Lambda \subset \partial \Sigma$ is
 a collection of marked points called \emph{stops}, and  $\eta$ is a line field on $\Sigma$. Partially wrapped Fukaya categories were first introduced in the work of Auroux \cite{auroux} in arbitrary dimension. In the case the symplectic manifold is a surface, which is our focus in this paper, there is a combinatorial description  of $\mathcal{W}(\Sigma, \Lambda; \eta)$ provided in \cite{HKK}. The latter not only gives a topological computation of the partially wrapped Fukaya category defined by symplectic machinery in \cite{auroux}, but also provides an independent, purely topological proof of the invariance of $\mathcal{W}(\Sigma, \Lambda;\eta)$ using the well-known contractibility result of Harer's arc complex. In particular, it follows from this topological description that given two graded surfaces with stops, $(\Sigma_i, \Lambda_i, \eta_i)$ for $i=1,2$, a homeomorphism $\phi : \Sigma_1 \to \Sigma_2$ which restricts to a bijection $\Lambda_1 \to \Lambda_2$ and a homotopy between $\phi_*(\eta_1)$ to $\eta_2$, we get an equivalence between the partially wrapped Fukaya categories $\mathcal{W}(\Sigma_1, \Lambda_1;\eta_1)$ and $\mathcal{W}(\Sigma_2, \Lambda_2;\eta_2)$. The proof of the equivalence of the two approaches, \cite{auroux} and \cite{HKK} given by Abouzaid in the case $\Lambda=\emptyset$ in the appendix of \cite{bocklandt} easily extends to the general case. Another possible approach to this equivalence is via the definition of wrapped Fukaya categories given in \cite{EL} which uses the symplectic field theory formulation. We note that we do not need to appeal to any of these equivalences for the applications in this paper, we simply work with with the definition and the established results given in \cite{HKK}. We next recall this combinatorial description of the partially wrapped Fukaya categories from \cite{HKK}.

A set of pairwise disjoint and
non-isotopic Lagrangians $\{ L_i \}$ in $\Sigma \backslash \Lambda$ generates the
partially wrapped Fukaya category $\mathcal{W}(\Sigma,\Lambda;\eta)$ as a triangulated category
if the complement of the Lagrangians 
\[ \Sigma \setminus \{ \bigsqcup_i L_i \} = \bigcup_f D_f\] 
is a union of disks $D_f$ each of which has at most one stop on its boundary. 
Furthermore, if each $D_f$ has exactly one stop in its boundary, the associative $\K$-algebra \[ A_{L_\bullet}:=\bigoplus_{i,j} \mathrm{hom}(L_i, L_j) \] is formal, and it 
can be described by a graded gentle algebra (see Def. \ref{gentle}). Figure \ref{disk} illustrates how each $D_f$ may look like, where the blue arcs are in $\bigsqcup_i L_i $ while the black arcs lie in $\partial \Sigma$. 

\begin{figure}[htb!]
\centering
\begin{tikzpicture}
\tikzset{vertex/.style = {style=circle,draw, fill,  minimum size = 2pt,inner        sep=1pt}}
\def \radius {1.5cm}
\tikzset{->-/.style={decoration={ markings,
        mark=at position #1 with {\arrow{>}}},postaction={decorate}}}

    \foreach \s in {0, 1,2,3,4 } { 
	    \draw [->-=.5] ({360/5*(\s)+360*3/40}:\radius) arc  ({360/5 *(\s)+360*3/40}:{360/5*(\s+1)-360*3/40
	    }:\radius);
	}
\foreach \s in {0, 1,2,3,4} { 
    \draw[blue] ({360/5 * (\s) - 360*3/40}:\radius) arc ({360/5
    *(\s)+360*3/40}:{360/5*(\s)-360*3/40}:-\radius);

}


\node[vertex] at ({360/5 * 5 + 360*4/40}:\radius) {} ;

    \node at ({360/5}:\radius-0.6cm) {\tiny $L_{m}$};
	\node at ({360/5 *2}:\radius-0.75cm) {\tiny $L_{m-1}$};
    \node at ({360/5 *3}:\radius-0.6cm) {{\tiny $L$}$_\cdot$ };
    \node at ({360/5 *4}:\radius-0.6cm) {\tiny $L_{2}$};
    \node at ({360/5 *5}:\radius-0.6cm) {\tiny $L_1$};

\end{tikzpicture}
    \caption{An example of a disk $D_f$ }
    \label{disk}
\end{figure}

The algebra $A_{L_\bullet}$ can easily be described by a quiver following the flow lines corresponding to rotation around the boundary components of
$\Sigma$ connecting the Lagrangians. Note that each boundary component of $\Sigma$ is an oriented circle (where the boundary orientation is induced by the area form on $\Sigma$). Specifically, a flowline that goes from $L_j$ to $L_i$ gives a generator for $\mathrm{hom}(L_i,L_j)$ (note the reversal of indices). The data of $\Lambda$ enters by disallowing flows that pass through a marked point. 
The algebra structure is given by concatenation of flow lines. Given $\alpha_i \in \mathrm{hom}(L_i,L_{i+1})$ for $i=1,\ldots, n$, we write
\[ \alpha_n \alpha_{n-1} \ldots \alpha_1 \in \mathrm{hom}(L_1,L_{n+1}) \]
for their product, read from right to left, and if non-zero, this expression corresponds to a flow from $L_{n+1}$ to $L_1$. 

Finally, the line field $\eta$ is used to grade the morphism spaces. A convenient way to determine the line field $\eta$ is by describing its restrictions along each of the disks $D_f$. Each such disk is as in Figure \ref{disk}. Different disks are glued along the curves $L_i$ (the blue parts in their boundary). As $L_i$ are contractible, changing a line field by homotopy, we can arrange that it is transverse to $L_i$ everywhere along $L_i$. Every line field on $\Sigma$ (up to homotopy) can be glued out of such line fields on the disks $D_f$.

Note that if we have an embedded segment $c\sub\Sigma$ and a line field $\eta$, which is transversal to $c$ at the ends
$p_1,p_2$ of $c$, then we can define the winding number $w_\eta(c)$ (first, one can trivialize $T\Sigma$ along $c$ in such a way that the tangent line to $c$ is constant, then count the number of times (with sign) $\eta$ coincides with the tangent line to $c$ along $c$. An equivalent definition is given in \cite[Sec. 3.2]{HKK}). Now a line field on a disk $D_f$, transverse to $\{ L_i \}$, is determined (up to homotopy) 
by the integers $\theta_i$, for $i=1,\ldots, m$, given
by its winding numbers along the boundary parts on $\partial \Sigma$ (the black parts in Figure \ref{disk}). 
By definition, these numbers are the degrees of the corresponding morphisms in the wrapped Fukaya category.

The numbers $\theta_i$ can be chosen arbitrarily subject to the constraint
\begin{align} 
\label{constraint} 
\sum_{i=1}^m \theta_i = m-2. \end{align}
This last constraint is the topological condition that needs to be satisfied in order for 
the line field to extend to the interior of the disk. (Note that the stops do not play a role in this discussion.) 

The gentle algebra $A_{L_\bullet}$ is always homologically smooth since so is $\mathcal{W}(\Sigma,\Lambda;\eta)$.
The algebra $A_{L_\bullet}$ is proper (i.e., finite-dimensional) if and only
if there is at least one marked point on every boundary component. The ``if" part is \cite[Cor.\ 3.1]{HKK}. On the other hand, if there is a boundary component with no stops, then we can compose flows along this boundary indefinitely, so $A_{L_\bullet}$ is not proper. 

In what follows, it will be convenient to consider 
$A_{L_{\bullet}}^{op}$ as a quiver algebra $\mathbb{K}Q /I$, so that flow lines from $L_i$ to $L_j$ 
correspond to arrows from the $i^{th}$ vertex to $j^{th}$ vertex. 
Note that the collection $\{L_i\}$ generates the partially wrapped Fukaya category 
$\mathcal{W}(\Sigma, \Lambda; \eta)$. Therefore, we have an equivalence 
\[ D(A_{L_{\bullet}}^{op}) \cong \mathcal{W}(\Sigma, \Lambda; \eta), \]
where the category on the left denotes the bounded derived category of perfect (left) dg-modules over $A_{L_{\bullet}}^{op}$.

\section{Gentle algebras and Fukaya categories}

\subsection{Graded gentle algebras and AAG-invariants}
A quiver is a tuple $Q=(Q_0,Q_1,s,t)$ where $Q_0$ is the set of vertices, $Q_1$ is the set of arrows, $s,t:Q_1\to Q_0$ is the functions that determine the source and target of the arrows. We always assume $Q$ to be finite.
A {\it path} in $Q$ is a sequence of arrows $\alpha_n\ldots \alpha_2\alpha_1$ such that $s(\alpha_{i+1})=t(\alpha_i)$ for $i=1,\ldots, (n-1)$. A {\it cycle} in $Q$ is a path of length
$\ge 1$ in which the beginning and the end vertices coincide but otherwise the vertices are distinct.
For $\K$ a field, let $\K Q$ be the path algebra, with paths in $Q$ as a basis and multiplication induced by concatenation. Note that the source $s$ and target $t$ maps have obvious extensions to paths in $Q$. 

\begin{definition} \label{gentle} A {\it gentle algebra}\footnote{Our terminology is the same as in \cite{Ringel}, 
so we do not impose the condition of finite-dimensionality in the definition of a gentle algebras. What we call ``gentle algebra" 
is sometimes referred to as ``locally gentle algebra".}
 $A = \K Q/I$ is given by a quiver $Q$ with relations $I$ such that 
	\begin{enumerate}
		\item Each vertex has at most two incoming and at most two outgoing edges. 
		\item The ideal $I$ is generated by composable paths of length 2.
		\item For each arrow $\alpha$, there is at most one arrow $\beta$ such that $\alpha \beta \in I$ and there is at most one arrow $\beta$ such that $\beta \alpha \in I$.
		\item For each arrow $\alpha$, there is at most one arrow $\beta$ such that $\alpha \beta \notin I$ and there is at most one arrow $\beta$ such that $\beta \alpha \notin I$.
	\end{enumerate}
In addition, we always assume $Q$ to be connected.	
\end{definition}

We will consider $\Z$-graded gentle algebras, i.e., every arrow in $Q$ should have a degree assigned to it.
For a $\Z$-graded gentle algebra $A$ we denote by $D(A)$ the derived category of perfect dg-modules over $A$,
where $A$ is viewed as a dg-algebra with its natural grading and zero differential. 

\begin{rmk} Note that $D(A)$ is different from the derived category of graded $A$-modules. In fact, the former
category is obtained from the latter as a suitable orbit category (see the discussion in \cite[Sec.\ 1.3]{P-VdB}). 
On the other hand, it is well known that 
if the grading of $A$ is zero then $D(A)$ is equivalent to the perfect derived category of ungraded $A$-modules. 
Indeed, in this case a dg-module over $A$ is the same thing as a complex of $A$-modules.
\end{rmk}

\begin{lem}\label{smooth-gentle-lem} 
(i) A gentle algebra is homologically smooth if and only if there are no \emph{forbidden cycles} i.e. cycles $\alpha_n\ldots \alpha_2\alpha_1$ in $\K Q$ such that $\alpha_{i+1} \alpha_{i} \in I$ for $i \in \Z/n$.

\noindent
(ii) A gentle algebra is proper (i.e., finite-dimensional) if and only if 
there are no \emph{permitted cycles} i.e. paths $\alpha_n \ldots \alpha_2 \alpha_1$ in $\K Q$ such that $\alpha_{i+1} \alpha_{i} \notin I$ for $i \in \Z/n$. 
\end{lem}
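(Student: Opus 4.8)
The plan is to treat the two parts separately, using in both the combinatorial fact that the gentle conditions force permitted (resp.\ forbidden) paths to extend uniquely, so that ``no cycle'' becomes equivalent to ``finitely many''.

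For part (ii), recall that $A=\K Q/I$ has a $\K$-basis consisting of the \emph{permitted paths}, i.e.\ the paths $\alpha_n\ldots\alpha_1$ (including the trivial paths $e_v$) none of whose length-$2$ subpaths $\alpha_{i+1}\alpha_i$ lies in $I$. If a permitted cycle $c$ exists, then all of its powers $c^k$ are again permitted paths, so $A$ is infinite-dimensional. Conversely, suppose there is no permitted cycle. By condition (4) in Definition \ref{gentle}, each arrow $\alpha$ admits at most one arrow $\beta$ with $\beta\alpha\notin I$; hence a permitted path has a unique permitted extension to the left (when one exists), and every permitted path is an initial segment of a unique maximal one. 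The absence of permitted cycles forces each such maximal path to have no repeated arrows, hence to be finite; as $Q_1$ is finite there are only finitely many permitted paths, and $A$ is finite-dimensional.

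For part (i), I would produce the standard minimal projective resolution of $A$ as a module over $A^e=A\ot_\K A^{op}$, whose $n$-th term is
\[ P_n=\bigoplus_{p} Ae_{t(p)}\ot_\K e_{s(p)}A, \]
where $p=\alpha_n\ldots\alpha_1$ runs over the \emph{forbidden paths of length $n$}, i.e.\ paths with $\alpha_{i+1}\alpha_i\in I$ for all $i$ (so $P_0$ is indexed by the vertices and $P_1$ by the arrows). The differential $d_n\colon P_n\to P_{n-1}$ sends the generator attached to $p$ to $\alpha_n\ot 1$ in the summand indexed by $\alpha_n\ldots\alpha_2$, plus $\pm\,1\ot\alpha_1$ in the summand indexed by $\alpha_{n-1}\ldots\alpha_1$. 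The verification that $d^2=0$ and that $(P_\bullet,d)$ resolves $A$ uses precisely the gentle conditions (3)--(4): each forbidden length-$2$ path $\alpha_{i+1}\alpha_i$ extends on either side to at most one further relation, so the overlaps organizing the resolution are unambiguous. Since every differential lands inside $\operatorname{rad}(A)\cdot P_{n-1}+P_{n-1}\cdot\operatorname{rad}(A)$, the resolution is minimal.

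Granting this, the lemma follows formally. The algebra $A$ is homologically smooth precisely when $A\in\Per(A^e)$, which for a minimal resolution is equivalent to $P_\bullet$ having finite length with finitely generated terms. By the same uniqueness-of-extension argument as in part (ii)---now applied to forbidden paths via condition (3)---the set of forbidden paths is finite if and only if there is no forbidden cycle. Thus no forbidden cycle yields a finite-length resolution by finitely generated projectives and hence smoothness, whereas a forbidden cycle produces forbidden paths of every length, an infinite minimal resolution, infinite projective bimodule dimension, and therefore $A\notin\Per(A^e)$. The main technical obstacle is the construction and exactness of $(P_\bullet,d)$; once this standard but somewhat delicate bimodule resolution is in hand, both equivalences reduce to the elementary combinatorics of unique path extension.
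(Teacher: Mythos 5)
Your proposal is correct in outline, but it takes a genuinely different route from the paper's. The paper disposes of the ``if'' direction of both parts by citing \cite[Prop. 3.4]{HKK}, notes that ``only if'' for properness is well known, and proves ``only if'' for smoothness one-sidedly: perfectness of the diagonal bimodule implies each simple module $S$ is perfect over $A$, hence $\Ext^*_{A-\dgmod}(S,S)$ is finite-dimensional, which via \cite[Thm.\ 1.3.3]{P-VdB} is identified with the ungraded $\Ext^*_A(S,S)$; the latter is computed by the one-sided Koszul complex, where a forbidden cycle would force infinite-dimensionality. You instead work directly with the bimodule (Bardzell/Koszul) resolution indexed by forbidden paths and extract both implications of (i) from minimality at once. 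What your approach buys is a uniform, self-contained treatment that does not outsource the ``if'' direction; what it costs is two substantial inputs that you assert rather than prove: (a) exactness of $(P_\bullet,d)$ --- this is Bardzell's theorem, equivalently Koszulity of quadratic monomial algebras, and it holds for \emph{all} quadratic monomial algebras, not ``precisely'' because of the gentle conditions (3)--(4); and (b) the equivalence ``$A\in\Perf(A^e)$ as a dg-module $\Leftrightarrow$ the minimal graded resolution is finite,'' which in the paper's setting (arbitrary $\Z$-grading on arrows, zero differential) needs exactly the kind of dg-versus-ungraded comparison that the paper obtains from \cite{P-VdB}. Two minor slips to fix: in your differential the summand labels are interchanged ($\alpha_n\ot 1$ should land in the summand indexed by $\alpha_{n-1}\cdots\alpha_1$, and $\pm 1\ot\alpha_1$ in the one indexed by $\alpha_n\cdots\alpha_2$); and in (ii), rather than appealing to maximal permitted paths (whose existence already presupposes termination), argue directly that the absence of permitted cycles forbids a repeated arrow in any permitted path --- a repetition of an arrow $\alpha$ yields a closed permitted subpath whose wrap-around composition is permitted because it equals a composition occurring inside the original path --- so all permitted paths have length at most $|Q_1|$ and $A$ is finite-dimensional.
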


\begin{proof} The ``if" direction is proved in \cite[Prop. 3.4(1)]{HKK} using an explicit form of the resolution of the diagonal bimodule. Note that such a resolution goes back to Bardzell's work \cite{Bardzell} (where the case of arbitrary monomial relations is considered). 
It remains to prove that if a gentle algebra $A$ is homologically smooth then there are no forbidden cycles.
Since $A$ is homologically smooth, the diagonal bimodule is a perfect dg-module over $A^{op}\ot A$.
Thus, for every simple $A$-module $S$ (corresponding to one of the vertices), we get a quasi-isomorphism of
$S$ with a perfect dg-module over $A$. It follows that the space $\Ext^*_{A-\dgmod}(S,S)$ is finite-dimensional.
Equivalently, the space $\Ext^*_A(S,S)$, computed in the category of ungraded $A$-modules, is finite-dimensional
(see \cite[Thm.\ 1.3.3]{P-VdB}). But the latter space can be computed using the standard Koszul complex, and
the presence of forbidden cycles would mean that for some $S$ the space $\Ext^*_A(S,S)$ is infinite-dimensional.

\noindent (ii) This is straightforward as properness is equivalent to having only finite number of paths that are nonzero in $A$
(see \cite[Prop. 3.4(2)]{HKK}).
\end{proof}


We will use the following notions from \cite{AAG}.

\begin{definition} A \emph{forbidden path} is a path in $Q$ of the form 
	\[  f= \alpha_{n-1} \ldots \alpha_2 \alpha_1 \in \K Q\]
	such that all $(\alpha_i)$ are distinct and for all $i=1,\ldots, (n-2)$, $\alpha_{i+1}\alpha_i \in I$. It is a \emph{forbidden thread} if for all $\beta \in Q_1$ neither $\beta \alpha_n \ldots \alpha_2 \alpha_1$ nor 
$\alpha_n \ldots \alpha_2 \alpha_1 \beta$ is a forbidden path.
	In addition, if $v \in Q_0$ with $\#\{ \alpha \in Q_1 | s(\alpha)=v\} \leq 1, \#\{ \alpha \in Q_1 | t(\alpha)=v\} \leq 1$, 
	then we consider the idempotent $e_v$ as a (trivial) forbidden thread in the following cases:
	\begin{itemize}
	\item either there are no $\alpha$ with $s(\alpha)=v$ or there are no $\alpha$ with $t(\alpha)=v$;
	\item we have $\beta,\gamma \in Q_1$ with $s(\gamma)=v=t(\beta)$ and $\gamma\beta \in I$.
	\end{itemize}
	The \emph{grading} of a forbidden thread is defined by
	\[ |f|= \sum_{i=1}^{n-1} |\alpha_i| - (n-2).  \]
\end{definition}
\begin{definition} A \emph{permitted path} is a path in $Q$ of the form 
\[  p =\alpha_n \ldots \alpha_2 \alpha_1 \in \K Q\]
	such that all $(\alpha_i)$ are distinct and 
for all $i=1,\ldots, (n-1)$, $\alpha_{i+1}\alpha_i \notin I$, and it is a \emph{permitted thread} if for all $\beta \in Q_1$ neither $\beta \alpha_n \ldots \alpha_2 \alpha_1$ nor 
$\alpha_n \ldots \alpha_2 \alpha_1 \beta$ is a permitted path. 
	In addition, if $v \in Q_0$ with $\#\{ \alpha \in Q_1 | s(\alpha)=v\} \leq 1, \#\{ \alpha \in Q_1 | t(\alpha)=v\} \leq 1$, 
	then we consider the idempotent $e_v$ as a (trivial) permitted thread in the following cases:
	\begin{itemize}
	\item either there are no $\alpha$ with $s(\alpha)=v$ or there are no $\alpha$ with $t(\alpha)=v$;
	\item we have $\beta,\gamma \in Q_1$ with $s(\gamma)=v=t(\beta)$ and $\gamma\beta \notin I$.
	\end{itemize}
	The \emph{grading} of a permitted thread is defined by
	\[ |p|= - \sum_{i=1}^n |\alpha_i|. \]

\end{definition}

\begin{rmk} Inclusion of the idempotents as forbidden and permitted threads ensures that every vertex appears in exactly 
two forbidden threads/cycles and exactly two permitted threads/cycles.
\end{rmk}

\begin{definition} For a gentle algebra $A$, a $\emph{combinatorial boundary component of type I}$ 
is an alternating cyclic sequence of forbidden and permitted threads:
	\[ b= p_nf_n \ldots  p_2 f_2 p_1 f_1 \]
such that $s(f_i) = s(p_i)$ for $i \in \mathbb{Z}/n$, and $t(p_i) =t(f_{i+1})$ for $i \in \mathbb{Z}/n$ with the following condition:

	$(\star)$ For each $i\in \mathbb{Z}/n$, if $f_{i+1} = \alpha_k \ldots \alpha_1$, $p_i = \beta_m \ldots \beta_1$, and $f_{i} = \gamma_n \ldots \gamma_1$ such that $s(f_i)=s(p_i)$ and $t(p_i)=t(f_{i+1})$, we have 
	\[ \gamma_1 \neq \beta_1 \text{  and  } \beta_m \neq \alpha_k. \] 

The winding number associated to a combinatorial boundary component $b$ of type I is defined to be
	\[ w(b):= \sum_{i=1}^r (|p_i| + |f_i|).\]
We also denote the number $n$ of forbidden threads in $b$ as $n(b)$.

A $\emph{combinatorial boundary component of type II}$ (that can appear only if
$A$ is not proper) is simply a permitted cycle
        \[ pc =\alpha_m \ldots \alpha_1.\]
The winding number associated to such a cycle is
        \[ w(pc):= -\sum_{i=1}^m |\alpha_i|.\]

A $\emph{combinatorial boundary component of type II'}$ (that can appear only if
$A$ is not homologically smooth) is simply a forbidden cycle
       \[ fc =\alpha_{m} \ldots \alpha_1.\]
The winding number associated to such a cycle is
        \[ w(fc):= \sum_{i=1}^m |\alpha_i|-m. \]

For combinatorial boundary components of types II and II' we set $n(b)=0$.
\end{definition}

\begin{lem}\label{AAG-lem}
Let $A$ be a proper gentle algebra, with grading in degree zero.
Then the collection of pairs $(n(b),n(b)-w(b))$, over all
	combinatorial boundary components (taken with multiplicities)
	coincides with AAG-invariants of $A$. 
\end{lem}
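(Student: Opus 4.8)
The plan is to compare the definition of the AAG-invariant from \cite{AAG} with the collection of combinatorial boundary components, showing that these describe the same objects and that the two numerical outputs agree coordinate-by-coordinate. First I would recall the AG-algorithm of \cite{AAG}: it produces a multiset of pairs by forming closed walks that alternate between maximal permitted paths and maximal forbidden paths, where at each junction the walk is forced to ``turn'' by leaving (resp.\ entering) a vertex through the arrow \emph{not} used by the preceding (resp.\ following) forbidden segment. In the language set up above, a maximal permitted path is exactly a permitted thread and a maximal forbidden path is exactly a forbidden thread; the incidence conditions $s(f_i)=s(p_i)$ and $t(p_i)=t(f_{i+1})$ record that consecutive threads meet at a common vertex, and the turning rule is precisely condition $(\star)$: $\gamma_1\neq\beta_1$ says that the permitted thread $p_i$ leaves $s(f_i)$ through the arrow not used by $f_i$, while $\beta_m\neq\alpha_k$ says that it enters $t(f_{i+1})$ through the arrow not used by $f_{i+1}$. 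The degenerate steps of the AG-algorithm at vertices with at most one incoming and at most one outgoing arrow correspond exactly to the trivial (idempotent) threads that were built into the definitions of forbidden and permitted threads; this is what guarantees that each vertex is visited twice and that the walks close up. This identification yields a bijection, respecting multiplicities, between the closed AG-walks and the combinatorial boundary components of type I. Separately, the arrows lying on a full cycle of relations are never reached by the main loop of the algorithm and are collected into forbidden cycles; since $A$ is proper there are no permitted cycles (Lemma \ref{smooth-gentle-lem}(ii)), so the only other boundary components are of type II$'$, and these match the corresponding relation-cycle contributions of the algorithm.

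Having fixed the bijection, it remains to match the two coordinates. I would use the description of the AG pair as (number of forbidden segments, total number of arrows occurring in the forbidden segments). The first number is by definition $n(b)$. For the second, I specialize the grading formulas to the present situation: since every arrow has degree $0$, each permitted thread $p_i$ has $|p_i|=0$, while a forbidden thread $f_i$ with $\ell_i$ arrows has $|f_i|=1-\ell_i$ (with $\ell_i=0$ for a trivial thread). Hence for a type-I component $b$ with $n=n(b)$,
\[ w(b)=\sum_{i=1}^{n}\bigl(|p_i|+|f_i|\bigr)=\sum_{i=1}^{n}(1-\ell_i)=n(b)-\sum_{i=1}^{n}\ell_i, \]
so that $n(b)-w(b)=\sum_{i=1}^{n}\ell_i$ is exactly the total number of arrows in the forbidden threads of $b$, i.e.\ the second AAG coordinate. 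For a type-II$'$ component coming from a forbidden cycle of length $m$ one has $n(b)=0$ and $w(b)=-m$, so $(n(b),\,n(b)-w(b))=(0,m)$, which is again the pair the algorithm assigns to a relation cycle of length $m$. This verifies $(n(b),\,n(b)-w(b))=(\text{AAG pair})$ for every boundary component.

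The main obstacle is the bijection in the first step rather than the numerics: one must check carefully that the turning rule of the AG-algorithm coincides with $(\star)$ in every local configuration, and in particular that the bookkeeping of the trivial (idempotent) threads reproduces exactly the special low-valence cases of \cite{AAG}, so that the walks close up with the correct multiplicities and no component is counted twice or omitted. Once this dictionary is in place, the equality of the two multisets of pairs follows immediately from the coordinate computation above.
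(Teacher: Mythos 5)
Your proposal is correct and takes essentially the same approach as the paper: the paper's proof simply asserts that the claim follows directly from the description of the AAG-invariants in \cite[Sec.\ 3]{AAG}, commenting only on the matching of forbidden cycles (type II' components) with the pairs $(0,m)$ from Step (3) of the algorithm, whereas you spell out the same dictionary in detail (threads versus maximal permitted/forbidden paths, condition $(\star)$ versus the turning rule, trivial idempotent threads versus the low-valence cases) together with the degree-zero numerics. One small point in your favor: your computation $w(fc)=-m$, hence $(n(b),n(b)-w(b))=(0,m)$, is the correct reading of the paper's definition $w(fc)=\sum_i|\alpha_i|-m$, while the paper's proof states $w(fc)=m$, a sign slip that does not affect the conclusion.
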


\begin{proof} This follows directly from the description of the AAG-invariants in \cite[Sec. 3]{AAG}.
Note that the pair $(0,m)$ in Step (3) of
                the algorithm of \cite[Sec. 3]{AAG} associated to a forbidden cycle $fc=\alpha_m\ldots\alpha_1$ matches with the pair
                $(0,w(fc))$
                associated with the corresponding combinatorial component of type II'. Indeed, $w(fc)=m$ since the grading of $A$ is in degree 0.
\end{proof}

{\it From now on we will always assume that our gentle algebras are homologically smooth},
with the exception of Remark \ref{non-smooth-rem}.

Motivated by Lemma \ref{AAG-lem} we extend the definition of the AAG-invariants to graded gentle algebras.

\begin{definition} For a graded gentle algebra $A$ we define the {\it AAG-invariants} to be the collection of pairs
$(n(b),n(b)-w(b))$, taken with multiplicities, where $b$ runs over all combinatorial boundary components of $A$.
\end{definition}

\subsection{Relation to Fukaya categories}

The definition of the combinatorial boundary component for a gentle algebra is motivated by the following proposition:

\begin{prop} \label{combinatorial} Suppose $\Sigma$ is a surface with a collection of marked points 
$\Lambda \subset \partial \Sigma$, and a line field $\eta$. 
Let $\{L_i\}$ be a collection of Lagrangians such that the complement of $\bigsqcup_{i} L_i$ is a union of 
disks each of which has exactly one stop on its boundary. 
Then the combinatorial boundary components of the homologically smooth gentle algebra 
$A = \bigl(\bigoplus_{i,j} \mathrm{hom}(L_i,L_j)\bigr)^{op}$ 
are in natural bijection with the boundary components of $\partial \Sigma$. 
Furthermore, if a combinatorial boundary component $b$ corresponds to a boundary component $B\sub \partial \Sigma$
then the number of forbidden threads in $b$ is equal to the number of stops on $B$ and the winding numbers match:
$$w_\eta(B)=w(b).$$
\end{prop}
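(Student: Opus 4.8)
The plan is to establish the bijection between combinatorial boundary components of $A$ and topological boundary components of $\Sigma$ by tracking how the generators $\{L_i\}$ and the disks $D_f$ sit around each boundary circle $B \subset \partial\Sigma$. First I would recall from Section 2 that each disk $D_f$ has exactly one stop, and that the arrows of $A^{op} = \bigoplus_{i,j}\mathrm{hom}(L_i,L_j)$ correspond to flow lines running along the boundary components between the Lagrangian endpoints. Fix a boundary component $B$ and travel along it in the boundary orientation. As one moves along $B$, one alternately traverses black boundary arcs (pieces of $\partial\Sigma$ lying on $B$) and passes the endpoints of the Lagrangians $L_i$ that touch $B$. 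The key observation is that traversing a maximal run of boundary arcs \emph{not} interrupted by a stop corresponds to a maximal composable sequence of nonzero products of arrows, i.e. a permitted thread $p_i$ (recall in $A$, as opposed to $A^{op}$, the relations and composability reverse, which is exactly why permitted threads for $A$ read off the flow lines), while the turning of the flow around an endpoint of an $L_i$ where a product vanishes corresponds to a forbidden thread $f_i$, and passing through a stop is what terminates a permitted thread. This gives the alternating cyclic sequence $p_n f_n \cdots p_1 f_1$ attached to $B$.

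Next I would verify the combinatorial conditions of the definition. The incidence relations $s(f_i)=s(p_i)$ and $t(p_i)=t(f_{i+1})$ follow because consecutive threads share the Lagrangian endpoint at which the flow transitions from going-straight to turning (or vice versa). The condition $(\star)$, namely $\gamma_1 \neq \beta_1$ and $\beta_m \neq \alpha_k$, is precisely the gentle-algebra constraint that at each vertex the two outgoing (resp. incoming) arrows split into one ``permitted'' and one ``forbidden'' continuation; geometrically this is the statement that at each Lagrangian endpoint on $B$ the flow line that continues straight and the one that turns are distinct arcs, which holds because each $L_i$ meets $B$ transversally with the two local sheets separated. I would also check that the idempotent (trivial thread) conventions match the degenerate cases where a vertex has at most one incoming and one outgoing arrow, corresponding to a Lagrangian endpoint that is a free end on $B$; the two bulleted cases in the definition of forbidden/permitted threads are designed exactly to match the two local pictures (a boundary arc with no turning, versus a corner where the product vanishes). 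The stop count is then immediate: each stop on $B$ sits at the end of one black arc and marks the transition from one permitted thread to the next forbidden thread, so the number of stops on $B$ equals the number $n(b)$ of forbidden threads.

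The winding-number identity $w_\eta(B)=w(b)$ is the analytic heart and the main obstacle. The plan is to compute $w_\eta(B)$ by decomposing $B$ into the pieces over which the threads run and summing local contributions. Using the setup of Section 2, along each disk $D_f$ the line field is tangent to the $L_i$ and determined by the integers $\theta_i$ equal to the winding numbers along its black boundary arcs, and these $\theta_i$ are by definition the degrees of the corresponding morphisms. Writing the total winding $w_\eta(B)$ as an integral of the turning of $\eta$ against the tangent of $B$, I would split the contributions into (a) the winding accumulated along the black arcs, which assemble into the sums $\sum|\alpha_i|$ appearing in the gradings $|p_i|$ and $|f_i|$, and (b) the fixed corner contributions at each Lagrangian endpoint where $B$ and the flow meet, which contribute the integer shifts $-(n-2)$ in $|f|$ and account for the relation between $|p|=-\sum|\alpha_i|$ and $|f|=\sum|\alpha_i|-(n-2)$. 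Matching these termwise against $w(b)=\sum_{i}(|p_i|+|f_i|)$ requires careful bookkeeping of orientations (the index reversal between $A$ and $A^{op}$, and the sign conventions $|p|=-\sum|\alpha_i|$ versus $|f|=\sum|\alpha_i|-(n-2)$), and of the constant $+2$ in the corner/disk extension constraint \eqref{constraint}. The cleanest route is probably to note that each permitted thread together with the following forbidden thread traces out a single continuous path around $B$ from one stop to the next, whose winding contribution is exactly $|p_i|+|f_i|$ by the additivity of winding numbers of concatenated arcs; summing over the $n$ stops gives $w_\eta(B)=\sum_{i=1}^n(|p_i|+|f_i|)=w(b)$. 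I expect the sign and constant-term reconciliation to be the step requiring the most care, since it depends on consistently combining the disk constraint $\sum\theta_i=m-2$ with the grading conventions for threads.
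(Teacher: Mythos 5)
There is a genuine gap, and it sits exactly at the two points you flag as delicate. First, your identification of the forbidden threads is incorrect: you say a forbidden thread arises from ``the turning of the flow around an endpoint of an $L_i$ where a product vanishes'' as one travels along $B$. In fact a forbidden thread is read off from a \emph{disk} $D_f$ of the complement: two consecutive black arcs of $\partial D_f$ meet the Lagrangian between them at its two \emph{different} endpoints, so their composition lies in $I$, and the maximal such chain --- interrupted only by the unique stop-arc of $D_f$ --- is the forbidden thread. Consequently the forbidden thread attached to a stop $s_i$ on $B$ consists of arrows that are flows along the \emph{other} black arcs of the disk containing $s_i$; these generally lie on other boundary components of $\Sigma$, not on $B$ at all. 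Your description would attach forbidden threads to Lagrangian endpoints on $B$ (of which there are typically many more than stops), which is inconsistent with your later, correct, assertion that $n(b)$ equals the number of stops; it is the bijection (forbidden threads) $\leftrightarrow$ (disks) $\leftrightarrow$ (stops) that makes both the alternating structure and the count work.

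Second, this misidentification breaks your winding-number argument. The crux is the claim that the winding number of the arc of $B$ passing through the stop $s_i$ equals the grading $|f_i|$ of the associated forbidden thread. This cannot be obtained, as you propose, from windings of black arcs of $B$ plus ``fixed corner contributions at each Lagrangian endpoint'' on $B$: the number $|f_i|=\sum_j|\alpha_j|-(m-2)$ depends on the degrees of arrows living on other boundary components, so no bookkeeping of data intrinsic to $B$ can produce it. The paper's proof obtains it by applying the extension constraint \eqref{constraint} to the disk $D$ containing $s_i$: the black arcs of $\partial D$ other than the stop-arc are precisely the arrows of $f_i$ and have winding numbers equal to their degrees, so \eqref{constraint} forces the winding along the stop-arc to be $|f_i|$ (up to the orientation convention); the flow arcs of $B$ contribute $|p_i|$, and summing over $i$ gives $w_\eta(B)=w(b)$. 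Your ``cleanest route'' --- that $p_i$ and $f_i$ together trace a continuous path on $B$ whose winding is $|p_i|+|f_i|$ by additivity --- is false as stated, precisely because $f_i$ does not lie on $B$; additivity only reduces the problem to the stop-arc computation, which is the step that needs (and in the paper receives) an argument. Finally, you treat only boundary components carrying at least one stop: since $A$ is assumed homologically smooth but not necessarily proper, $B$ may carry no stops, in which case the flows along $B$ form a permitted cycle (a combinatorial boundary component of type II); this case must be handled separately, as the paper does in one sentence.
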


\begin{proof} Figure \ref{boundary} shows an example of the way the surface $\Sigma$ looks around a boundary component $B$. Assume first that there is at least one stop on $B$. Let 
$$q_1(1),\ldots,q_1(k_1),q_2(1),\ldots,q_2(k_2),\ldots,q_n(1),\ldots,q_n(k_n)$$
be the endpoints of the Lagrangians ending on $B$, ordered compatibly with the orientation of $B$. Here we assume
that there are no stops between $q_i(j)$ and $q_i(j+1)$ and there is exactly one stop $s_i$ between
$q_i(k_i)$ and $q_{i+1}(1)$, for $i\in \Z/n$. Then for every $i\in\Z/n$ we have a permitted thread $p_i=\b_i(k_i-1)\ldots\b_i(1)$,
where $\b_i(j)$ is the generator of $A$ corresponding to the flow on $B$ from $q_i(j)$ to $q_i(j+1)$. On the other hand,
each stop $s_i$ lies on a unique disk $D$, and by looking at the pieces of $\partial D$ formed by other boundary components
of $\Sigma$, we obtain a forbidden thread $f_i= \alpha_{m_i}\ldots \alpha_1$ starting at the Lagrangian corresponding to
$q_i(1)$ and ending at the one corresponding to $q_{i-1}(k_{i-1})$. Thus, we get a combinatorial boundary component of
type I, $b=p_nf_n\ldots p_1f_1$.

The winding number of $\eta$ along the arc passing through the stop, oriented in the opposite direction to the boundary direction, is determined using the constraint \eqref{constraint} to be
	\[ |f| = \sum_{i=1}^{n-1} |\alpha_i| - (n-2) \]
	On the other hand, the winding number of $\eta$ along the arc corresponding to the permitted thread $p$ is simply $|p|$. 
Thus, we get the equality $w_\eta(B)=w(b)$.

In the case of a boundary component $B\sub\partial\Sigma$ with no stops, the sequence of flows between
the corresponding ends of Lagrangians on $B$ gives a permitted cycle, i.e., a combinatorial boundary component of type II.
Again, the winding numbers match.

It is easy to see that in this way we get a bijection between the boundary components $B$ and the combinatorial boundary
components of $A$.	

\begin{figure}[!h]
\begin{tikzpicture}
\tikzset{vertex/.style = {style=circle,draw, fill,  minimum size = 2pt,inner        sep=1pt}}
\def \radius {2.5cm}
\tikzset{->-/.style={decoration={ markings,
        mark=at position #1 with {\arrow{<}}},postaction={decorate}}}

\draw[->-=.7] ({0}:\radius/5) arc ({0}:{360/5*5}:\radius/5);
\draw[->-=.3] ({360/5 *5}:\radius/7)+(0.36,2.22) arc ({360/5 *2}:{360/5*5}:\radius/7);
\draw[->-=.2] ({360/5 *5}:\radius/7)+(-2,2.7) arc ({360/5 *3}:{360/5*5}:\radius/7);
\draw[->-=.5] ({360/5 *5}:\radius/7)+(-4,2) arc ({360/5 *3}:{360/5*5.5}:\radius/7);
\draw[->-=.5] ({360/5 *5}:\radius/7)+(-3.8,-0.5) arc ({360/5 *3.7}:{360/5*6.3}:\radius/7);
\draw[->-=.5] ({360/5 *5}:\radius/7)+(-2,-2.7) arc ({360/5 *5}:{360/5*7}:\radius/7);
\draw[->-=.5] ({360/5 *5}:\radius/7)+(1.3,-2) arc ({360/5 *1}:{360/5*3}:\radius/7);
\draw[->-=.5] ({360/5 *5}:\radius/7)+(2.1,0.3) arc ({360/5 *1}:{360/5*3.5}:\radius/7);

\node[vertex] at ({360/5 * 4.6}:\radius/5) {} ;
\node[vertex] at ({360/5 *2}:\radius/5) {} ;
\node[vertex] at (1.1,1.67) {};
\node[vertex] at (-1.1,2.65) {};
\node[vertex] at (1.2,-2.3) {};

	\draw[blue] (0.3,0.4) to (0.84,1.7);
\draw [blue] (-0.2,0.45) to (-1.4,2.55);
\draw [blue] (-0.4, -0.3) to (-3.1,-0.3);
\draw [blue] (0.3, -0.4) to (1.4,-2);
\draw [blue] (0.5,0) to (2,0);

\draw[blue] (-1.6,2.65) to (-3,2.2);
\draw[blue] (-3.4,1.85) to (-3.4,0.21);
\draw[blue] (-3.35,-0.5) to (-2.2,-2.4);
\draw[blue] (-1.7,-2.5) to (1.24,-2.5);
\draw[blue] (1.55,-1.97) to (2.1,-0.3); 

\draw[blue] (2.2,0.28) to (1.3,1.8); 
\draw[blue] (0.68,2.15) to (-1,2.85);

\node at (-1.5, 2.3) {\tiny $\alpha_1$};
\node at (-2.9, 1.8) {\tiny $\alpha_2$};
\node at (-2.9, 0.2) {\tiny $\alpha_3$};

\node at (0.1, 0.7) {\tiny $\beta_1$};
\node at (0.7, 0.2) {\tiny $\beta_2$};

\node at (1.45, -1.8) {\tiny $\gamma_1$};
\node at (1.85, -0.3) {\tiny $\gamma_2$};

\node at (-0.2, -0.7) {\tiny $\delta_1$};

\node at (0.45, 1.8) {\tiny $\tilde{\beta}_2$};

\end{tikzpicture}
	\caption{The boundary component is given by the cyclic sequence $p_2f_2p_1f_1$ where $f_1= \alpha_3 \alpha_2 \alpha_1$, $p_1= \beta_2 \beta_1$, $f_2 = \gamma_2 \gamma_1$ and $p_2 =\delta_1$. Note that if instead of $f_1$, we considered the forbidden thread $\tilde{f}_1 = \tilde{\beta}_2 \beta_1$, the condition $(\star)$ is violated. }
\label{boundary} 	
\end{figure}

\end{proof}

Let $A$ be a homologically smooth gentle algebra. 
We associate with $A$ a ribbon graph $\mathcal{R}_A$ 
whose vertices are in bijection with the collection of forbidden threads in $Q$, and whose edges are in bijection with vertices of $Q$. More precisely, recall that there are precisely two forbidden threads that pass through a vertex of $Q$. The corresponding edge on $\mathcal{R}_A$ is defined to connect the two forbidden threads. 

Next, we will define a ribbon structure, i.e., a cyclic order on the set of edges incident to each vertex. 
In fact, we will equip each such set of edges with a total order which will induce a cyclic order.
	(Thus, we get what is called a \emph{ciliated fat graph} \cite{fockrosly}.)
Namely, the set of edges incident to a vertex $f$ of $\mathcal{R}$ is in bijection with the set of vertices of $Q$ which appear in the forbidden thread $f$. Now we use the order in which these vertices appear in the forbidden thread $f$. 

Thus, we can consider the associated thickened surface 
$\Sigma_A$ such that $\mathcal{R}_A$ is embedded as a deformation retract of $\Sigma_A$. 
More specifically, to construct $\Sigma_A$ we replace each vertex of $\mathcal{R}_A$ with a 2-disk $\mathbb{D}^2$ and each edge with a strip, a thin oriented rectangle $[-\epsilon, \epsilon] \times [0,1]$, where the rectangles are attached to the boundary of the disks according to the given cyclic orders at the vertices. On the boundary of each disk associated to the vertex of $\mathcal{R}_A$ we
	also mark a point, called a \emph{stop} as follows. If the linear order
	on edges incident to this vertex is given by   $e_1 < e_2 <  \ldots <
	e_k$, the stop $e_0$ appears in the circular order such that $e_k < e_0
	< e_1$.  We define $\Lambda_A$ by taking the union of all such points. In particular, the cardinality of $\Lambda_A$, is equal to the number of forbidden threads in $A$.

\begin{thm}\label{gentle-Fuk-thm} (i) Given a homologically smooth gentle algebra $A$ over a field $\mathbb{K}$ 
(with $|Q_1|>0$), let $(\Sigma_A, \Lambda_A)$ be the corresponding surface with stops defined above. 
Then $\Sigma_A$ is connected with non-empty boundary, and for each $\Z$-grading on $A$ there is a natural
line field $\eta$ on $\Sigma$ such that we have a derived equivalence
	\[ D(A) \cong \mathcal{W}(\Sigma_A, \Lambda_A; \eta_A). \]
Furthermore, the AAG-invariants of $A$ are given by the collection
          of pairs 
          $$(n_i,n_i - w_{\eta_A}(\partial_i \Sigma_A)),$$ 
          where $(\partial_i\Sigma_A)_{i=1,\ldots,N})$ are all boundary components of $\Sigma_A$ and
	  $n_i \in \mathbb{Z}_{\geq0}$ is the number of marked points on $\partial_i \Sigma_A$.

\noindent
(ii) One has 
$$\chi(\Sigma_A)=\chi(Q)=|Q_0|-|Q_1|.$$ 
\end{thm}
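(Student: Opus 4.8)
The plan is to invert the construction of \cite{HKK}: from the purely combinatorial data of $A = \K Q/I$ I will build a graded surface with stops $(\Sigma_A,\Lambda_A;\eta_A)$ together with a generating collection of Lagrangians $\{L_v\}$ whose endomorphism algebra, computed by the combinatorial model of the previous section, is $A^{op}$. Concretely, I take one embedded arc $L_v$ for each vertex $v\in Q_0$ and one disk $D_f$ for each forbidden thread $f$ of $A$ (trivial threads $e_v$ included). If $f = \alpha_k\cdots\alpha_1$ visits the vertices $v_0,\dots,v_k$, then $D_f$ is a $2(k+1)$-gon whose sides alternate between the arcs $L_{v_0},\dots,L_{v_k}$ and $k+1$ arcs on $\partial\Sigma_A$, of which $k$ carry the arrows $\alpha_1,\dots,\alpha_k$ as flow lines and exactly one carries a single stop. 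Here homological smoothness is exactly what makes the construction work: by Lemma \ref{smooth-gentle-lem}(i) there are no forbidden cycles, so each forbidden thread really terminates and its closing stop-arc exists (a forbidden cycle would instead produce a stopless annulus, i.e.\ a type II$'$ component). Since each vertex lies on exactly two forbidden threads, each $L_v$ occurs in exactly two disks, and I glue the two copies along $L_v$ in the orientation-reversing way; this yields an oriented surface. It is connected because $Q$ is: an arrow $v\to w$ forces $L_v$ and $L_w$ to border a common disk, so the dual gluing graph inherits connectivity from $Q$. The boundary is non-empty since every disk contributes arcs to it.

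Next I would identify the algebra and install the grading. By the combinatorial description recalled in the previous section, $\bigoplus_{i,j}\mathrm{hom}(L_i,L_j)$ is spanned by flows along $\partial\Sigma_A$ avoiding the stops, composed by concatenation; by construction its generators are the arrows of $Q$, two adjacent arrows composing to zero precisely when they are consecutive sides of one disk, i.e.\ precisely for the relations in $I$. This identifies $\bigoplus_{i,j}\mathrm{hom}(L_i,L_j)$ with $A^{op}$, so the formality and the equivalence $D(A_{L_\bullet}^{op})\cong\mathcal{W}(\Sigma,\Lambda;\eta)$ from the previous section give $D(A)\cong\mathcal{W}(\Sigma_A,\Lambda_A;\eta_A)$. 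I define $\eta_A$ disk-by-disk: on $D_f$ I set the winding number of each arrow-arc equal to the degree $|\alpha_i|$ and let the stop-arc take the value forced by the extension constraint \eqref{constraint}; arranging each line field tangent to the $L_v$ lets these glue to a global $\eta_A$. The AAG statement is then immediate from Proposition \ref{combinatorial}: boundary components of $\Sigma_A$ biject with combinatorial boundary components $b$ of $A$, with $n_i = n(b)$ stops and $w_{\eta_A}(\partial_i\Sigma_A) = w(b)$, so the AAG-invariants are exactly the pairs $(n_i,\,n_i - w_{\eta_A}(\partial_i\Sigma_A))$.

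The hard part will be the two consistency checks hidden in the first step: that the gluing can be performed orientably, and that the flows reproduce $Q$ and $I$ on the nose, with no missing or extra generators or relations. Both are controlled by the local picture at the two endpoints of each $L_v$, which is dictated by the two forbidden and two permitted threads through $v$; the gentle conditions (at most two incoming and two outgoing arrows at each vertex, and the uniqueness of the forbidden and of the permitted successor/predecessor of each arrow) are precisely what guarantees that these local pictures assemble into a well-defined, orientable ribbon structure realizing $A$. This bookkeeping, modelled on \cite{HKK}, is the technical core.

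For (ii), I would use that $\Sigma_A$ deformation-retracts onto the spine dual to the arc system $\{L_v\}$, a graph with one vertex per disk and one edge per arc $L_v$; hence $\chi(\Sigma_A) = (\#\text{disks}) - |Q_0|$. Every disk carries exactly one stop and, by Proposition \ref{combinatorial}, stops biject with forbidden threads, so $\#\text{disks}$ equals the number $F$ of forbidden threads. Finally, counting incidences between vertices and forbidden threads, a thread with $k$ arrows visits $k+1$ vertices while every vertex is visited by exactly two forbidden threads, giving $|Q_1| + F = 2|Q_0|$, i.e.\ $F = 2|Q_0| - |Q_1|$. Therefore $\chi(\Sigma_A) = F - |Q_0| = |Q_0| - |Q_1| = \chi(Q)$. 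As a consistency check, the same value drops out of \eqref{chi-winding-eq} and Proposition \ref{combinatorial}, since a short telescoping gives $\sum_b w(b) = F - |Q_1|$ while $2\chi(\Sigma_A) = \sum_i w_{\eta_A}(\partial_i\Sigma_A) = \sum_b w(b)$.
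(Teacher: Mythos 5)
Your proposal is correct and follows essentially the same route as the paper: your disks-$D_f$-glued-along-arcs-$L_v$ picture is exactly the dual description of the paper's thickened ribbon graph $\mathcal{R}_A$ (vertices $=$ forbidden threads, edges $=$ vertices of $Q$, cyclic order from the order of vertices along each thread), and your connectivity argument, algebra identification, grading via \eqref{constraint}, appeal to Proposition \ref{combinatorial}, and the incidence count $F=2|Q_0|-|Q_1|$ for part (ii) all match the paper's proof step for step.
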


\begin{proof} (i) 
First, let us check that the ribbon graph $\mathcal{R}_A$ and hence the associated surface $\Sigma_A$ is connected.
Indeed, for every vertex $v$ of $Q$ let $e(v)$ be the corresponding edge in $\mathcal{R}_A$, viewed as a subgraph in
$\mathcal{R}_A$. Since $Q$ is connected, it is enough to check that if $v$ and $v'$ are connected by an edge $\alpha$ in 
$Q$ then $e(v)$ and $e(v')$ intersect in $\mathcal{R}_A$. Indeed, let $f$ be a forbidden thread containing $\alpha$
(it always exists). Then $f$ is a vertex of both $e(v)$ and $e(v')$. This proves our claim that $\mathcal{R}_A$ is
connected.

Dual to the edges of $\mathcal{R}_A$ we obtain a disjoint collection of non-compact arcs $L_v$ indexed by vertices of $Q$. Thus, $\Sigma_A$ is a surface with non-empty oriented boundary, 
$\Lambda_A$ is a set of marked points in its boundary, 
and $\{L_v : v \in Q_0\}$ is a collection pair-wise disjoint and non-isotopic Lagrangian arcs in 
$\Sigma_A \setminus \Lambda_A$. Furthermore, the complement
	\[ \Sigma_A \setminus \{ \bigsqcup_v L_v \} = \bigcup_f D_f\] 
	is a union of disks $D_f$ indexed by forbidden threads $f$ in $Q$, with exactly one marked point on its boundary (see Examples \ref{ex1}, \ref{ex2} below). In particular, the collection $\{L_v\}$ gives a generating set.

	By construction, there is a bijection between arrows in the quiver $Q$ and 
  the generators of the endomorphism algebra $A_L:=\bigoplus_{v,w} \mathrm{hom}(L_v,L_w)$ 
 since each edge $\alpha$ in $Q$ is in exactly one forbidden thread $f$, 
 and the corresponding $D_f$ has a flow associated to $\alpha$. Furthermore, two flows 
  $\alpha_1 : L_{v_2} \to L_{v_1}$ and $\alpha_2: L_{v_3} \to L_{v_2}$ 
   can be composed in $A_L$ if and only if $\alpha_i$ is in a forbidden thread $f_i$, for $i=1,2$, 
  such that the disks $D_{f_1}$ and $D_{f_2}$ are glued along the edge corresponding to $v_2$. 
  But this means that the corresponding elements of $A$ satisfy $\alpha_2 \alpha_1 \notin I$, 
  as otherwise condition (3) of Definition \ref{gentle} would be violated. This implies that 
   $A$ is naturally identified with $A_L^{op}$ as an ungraded algebra.

	We define the line field $\eta_A$ on $\Sigma_A$ as follows. 
      We require that the line field is transverse to each $L_v$. 
    Then it suffices to describe its restrictions to the disks $D_f$. 
    Each $D_f$ is a $2m$-gon as in Figure \ref{disk}. As explained in Section \ref{section2}, the homotopy class of a line field on $D_f$ is determined by the winding numbers
	$\theta_i$ along the boundary arcs of $D_f$, $\alpha_i$, for $i=1,\ldots,(m-1)$, 
   avoiding the unique stop (black in Figure \ref{disk}) between the Lagrangians (blue in Figure \ref{disk}). 
         Indeed, the remaining winding number $\theta_m$
along the boundary arc that passes through the stop is determined by the condition  $\sum_{i=1}^m \theta_i = m-2$,
and we can define $\eta_A|_{D_f}$ as the unique line field with these winding numbers.
Now we set
$\theta_i$, for $i=1,\ldots,m-1$, to be the degree of the generator of $A$ corresponding to $\alpha_i$. 
	
With this definition
	$A$ and $A_L^{op}$ are identified as graded algebras. 
Since we also know that the collection $\{L_v\}$ generates $\mathcal{W}(\Sigma_A, \Lambda_A; \eta_A)$, 
         we conclude that 
	\[ D(A) \cong \mathcal{W}(\Sigma_A, \Lambda_A;\eta_A). \]

	Finally, the last statement follows from Proposition \ref{combinatorial}.

\noindent
(ii) We have $\chi(\Sigma_A)=\chi(\mathcal{R}_A)$. Let us denote by $v(\RR_A)$ and $e(\RR_A)$
the numbers of vertices and edges in $\RR_A$. We have $e(\RR_1)=|Q_0|$, while $v(\RR_A)$ is the number of forbidden
threads. Let $f_1,\ldots,f_m$ be all forbidden threads. Since every edge of $Q$ belongs to the unique forbidden thread,
we have 
$$\sum \ell(f_i)=|Q_1|$$
(where $\ell(\cdot)$ is the length). On the other hand, since every vertex is contained in exactly two forbidden threads, we have
$$\sum (\ell(f_i)+1)=2|Q_0|.$$
Combining this with the previous formula we get 
$$v(\RR_A)=2|Q_0|-|Q_1|,$$
so we deduce that $\chi(\RR_A)=\chi(Q)$.
\end{proof}

Using formula \eqref{genus-formula} we derive the following property of the AAG-invariants.

\begin{cor}\label{genus-cor} 
Let $\{(n_i,m_i)\}_{i=1,\ldots,d}$ be the AAG-invariants of a homologically smooth graded gentle algebra $A$.
Then
$$\sum_{i=1}^d (n_i-m_i+2)=4-4g,$$
where $g\ge 0$ is the genus of the corresponding surface $\Sigma_A$.
\end{cor}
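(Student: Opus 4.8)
The plan is to combine the explicit description of the AAG-invariants from Theorem \ref{gentle-Fuk-thm}(i) with the genus formula \eqref{genus-formula}; no independent argument is really needed, since the corollary is the immediate algebraic consequence of these two facts.

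First I would recall from Theorem \ref{gentle-Fuk-thm}(i) that, under the correspondence attaching the graded surface $\Sigma_A$ to $A$, the AAG-invariants are precisely the pairs
\[ (n_i,\ n_i - w_{\eta_A}(\partial_i \Sigma_A)), \]
indexed by the boundary components $\partial_i \Sigma_A$, where $n_i$ is the number of marked points on $\partial_i \Sigma_A$. Writing the $i$-th AAG-invariant as $(n_i, m_i)$, I therefore read off $m_i = n_i - w_{\eta_A}(\partial_i \Sigma_A)$, and hence
\[ n_i - m_i = w_{\eta_A}(\partial_i \Sigma_A). \]

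Substituting this into the left-hand side of the claimed identity gives
\[ \sum_{i=1}^d (n_i - m_i + 2) = \sum_{i=1}^d \bigl(w_{\eta_A}(\partial_i \Sigma_A) + 2\bigr), \]
and then I would apply the genus formula \eqref{genus-formula} to $\Sigma_A$, whose boundary components are exactly the ones indexing the AAG-invariants by Proposition \ref{combinatorial}. This identifies the right-hand side with $4 - 4g(\Sigma_A)$, yielding the assertion.

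There is essentially no obstacle here: the substantive content lives in Theorem \ref{gentle-Fuk-thm} (matching AAG data with winding numbers) and in the Poincar\'e--Hopf-derived relation \eqref{genus-formula}, and this statement is their purely formal combination. The only point demanding a moment's care is the bookkeeping of indices, namely that the $d$ AAG-invariant pairs stand in bijection with the $d$ boundary components of $\Sigma_A$ via matching winding numbers; but this is exactly what Proposition \ref{combinatorial} provides, so the argument is complete.
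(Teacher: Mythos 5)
Your proposal is correct and is exactly the paper's argument: the paper introduces the corollary with ``Using formula \eqref{genus-formula} we derive the following property of the AAG-invariants,'' i.e., it also reads off $n_i-m_i=w_{\eta_A}(\partial_i\Sigma_A)$ from Theorem \ref{gentle-Fuk-thm}(i) and substitutes into \eqref{genus-formula}. Nothing is missing from your bookkeeping, and no further justification is needed.
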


Combining Theorem \ref{gentle-Fuk-thm} with Corollary \ref{lineorbit-cor}, we get the following result.

\begin{cor}\label{gentle-Fuk-cor} 
Given two homologically smooth graded gentle algebras $A$ and $B$, assume 
that the AAG-invariants of $A$ and $B$ are the same, and in addition, the invariants $\wt{A}(\cdot)$, $\si(\cdot)$ and $A(\cdot)$ 
(see Theorem \ref{lineorbit})
of the line fields $\eta_A$ (on $\Sigma_A$) and $\eta_B$ (on $\Sigma_B)$ are the same whenever they are defined.
      Then $D(A)\simeq D(B)$.
\end{cor}

As a particular case of the last Corollary, we can describe some cases when already looking at the AAG-invariants 
 gives the derived equivalence.     
      
\begin{cor}\label{AAG-cor} Assume that $A$ and $B$ are homologically smooth graded gentle algebras, such that
the AAG-invariants of $A$ and $B$ coincide (up to permutation) and are given by a collection
 $\{(n_i,m_i)\}_{i=1,\ldots,d}$. Assume in addition that one of the following conditions holds:
  
 \noindent
 (a) $\sum_i (n_i-m_i+2)=4$;
 
 \noindent
 (b) $\sum_i (n_i-m_i+2)=0$ and $\gcd(n_1-m_1+2,\ldots,n_d-m_d+2)=1$;
 
 \noindent
 (c) $\sum_i (n_i-m_i+2)<0$ and at least one of the numbers $n_i-m_i$ is odd.
 
      Then $D(A)\simeq D(B)$.
\end{cor}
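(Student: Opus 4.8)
The plan is to reduce the statement to the preceding Corollary, which produces a derived equivalence from any homeomorphism $\phi\colon\Sigma_A\to\Sigma_B$ matching stops and carrying $\eta_A$ into the $\MM(\Sigma_B)$-orbit of $\eta_B$; by Theorem \ref{lineorbit} the latter amounts to matching the relevant winding-number invariants. First I would translate the hypothesis into the language of line fields via Theorem \ref{gentle-Fuk-thm}: the AAG-pair $(n_i,m_i)$ of a boundary component encodes the number $n_i$ of stops on it together with $w_{\eta}(\partial_i\Sigma)=n_i-m_i$, so that $r_i(\eta)=n_i-m_i+2$. Matching AAG-invariants therefore fixes a bijection of boundary components under which both the stop counts and the boundary winding numbers agree.

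Next I would record that the genus is determined: by Corollary \ref{genus-cor} the common value $\sum_i(n_i-m_i+2)=4-4g$ forces $\Sigma_A$ and $\Sigma_B$ to have equal genus, equal number of boundary components, matching stop counts, and matching boundary winding numbers. Since $A$ and $B$ are connected the surfaces are connected, so the classification of oriented surfaces with boundary and marked points supplies the required homeomorphism $\phi$. After transporting $\eta_B$ by $\phi$, the boundary winding numbers of $\phi_*(\eta_A)$ and $\eta_B$ coincide, and it remains only to verify the extra invariants of Theorem \ref{lineorbit} in the three genus ranges singled out by (a), (b), (c).

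Then I would dispatch the cases. In (a) the sum equals $4$, giving $g=0$, and Theorem \ref{lineorbit}(i) says boundary winding numbers alone determine the homotopy class. In (b) the sum is $0$, so $g=1$, and by Theorem \ref{lineorbit}(ii) I must match $\wt{A}(\eta)$; but the hypothesis $\gcd(n_1-m_1+2,\ldots,n_d-m_d+2)=1$ says the boundary entries $w_\eta(\partial_i\Sigma)+2$ appearing in the defining gcd are already coprime, forcing $\wt{A}(\eta_A)=1=\wt{A}(\eta_B)$. In (c) the sum is negative, so $g\ge2$; the assumption that some $n_i-m_i$ is odd makes some $r_i(\eta)$ odd, which by the observation recorded when $\sigma$ was introduced forces $\sigma(\eta_A)=\sigma(\eta_B)=1$. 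Since not all boundary winding numbers are then even, condition (2) of Theorem \ref{lineorbit}(iii) requires no separate verification and condition (3), relevant only when $\sigma=0$, is vacuous; so conditions (1)--(3) all hold.

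I do not expect a serious obstacle: the argument is essentially a bookkeeping translation between the AAG-invariants and the winding-number invariants of Theorem \ref{lineorbit}, combined with the observation that hypotheses (a)--(c) are precisely tuned to make the higher invariants ($\wt{A}$ in genus $1$, and $\sigma$ together with the Arf invariant in genus $\ge2$) either trivial or automatically matched. The one point needing care is that $\phi$ must respect the per-component stop counts $n_i$; but since the AAG-pair carries $n_i$ explicitly, matching the invariants already enforces this, after which the classification of surfaces produces $\phi$.
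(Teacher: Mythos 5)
Your proposal is correct and follows essentially the same route as the paper: use Corollary \ref{genus-cor} to see that cases (a), (b), (c) correspond to genus $0$, $1$, and $\ge 2$ respectively, then invoke the matching case of Theorem \ref{lineorbit} (your case analysis of why $\wt{A}$, $\sigma$, and the Arf invariant are automatically matched under hypotheses (b) and (c) is exactly the intended content, spelled out in more detail than the paper's two-line proof).
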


\begin{proof} By Corollary \ref{genus-cor}, the three cases are distinguished by the genus $g(\Sigma_A)$:
in case (a) it is $0$, in case (b) it is $1$, and in case (c) it is $>1$. Now the assertion follows from 
Corollary \ref{lineorbit-cor}.
\end{proof}
         
\begin{rmk}
There is a simple combinatorial recipe for calculating winding numbers of the line field $\eta$ on $\Sigma_A$,
along the loops corresponding to cycles in the graph $\RR_A$. Note that knowing these numbers is
enough to calculate all the invariants of $\eta$.
                    
Indeed, a cycle in the graph $\RR_A$ is an alternating sequence $\ldots v_if_iv_{i+1}f_{i+1}\ldots$
of vertices and forbidden threads in $Q$. Since $\eta$ is transverse to each arc $L_{v_i}$, we can calculate the
winding number as the sum of winding numbers of the segments of the cycle connecting a point in $L_{v_i}$ with
a point in $L_{v_{i+1}}$ through the disk $D_{f_i}$. 
Recall that the boundary of this disk is formed by the arcs $L_v$ where $v$ runs through vertices in the thread $f_i$,
and the parts of the boundary labeled by arrows in $f_i$ (as in Figure \ref{boundary}). Now we claim that 
the contribution to the winding number from the segment $v_if_iv_{i+1}$ is equal to
$$w_\eta(v_if_iv_{i+1})=\begin{cases} 1-m+\sum_{j=1}^m \deg(\a_j), & v_i \xrightarrow{\a_1}\ldots\xrightarrow{\a_m}v_{i+1}\sub f_i\\ 
-1+m-\sum_{j=1}^m \deg(\b_j), & v_{i+1}\xrightarrow{\b_1}\ldots\xrightarrow{\b_m}v_i\sub f_i.\end{cases}$$
Indeed, this follows immediately by looking at the polygon formed by the arcs $L_{v_i}$ and $L_{v_{i+1}}$, by
the segment of our cycle between them, and by the part of the boundary of $D_{f_i}$ between these arcs.
\end{rmk}                    
                    
\subsection{Application to finite-dimensional gentle algebras and examples}                    

It is well known that gentle algebras are Koszul and that the Koszul dual of a gentle algebra is again gentle,
corresponding to the dual combinatorial data (see \cite[Sec.\ 3.3]{BH} where what we call ``gentle" is called
``locally gentle"). Furthermore, under this duality homologically smooth graded gentle algebras are exchanged
with finite-dimensional ones.
Thus, using Koszul duality we can convert our results into those
about finite-dimensional gentle algebras. 
      
Let $A$ be a finite-dimensional gentle algebra with grading in degree $0$. Let $A^!$ be the Koszul dual
gentle algebra (with respect to the generators given by the edges). We equip $A^!$ with the grading for which all
edges have degree $1$ (i.e., path-length grading).
Then the result of Keller in \cite[Sec.\ 10.5]{keller} (``exterior" case) gives an equivalence 
$$D_f(A)\simeq D(A^!),$$                                                                
where $D_f(A)$ is the bounded derived category of finite-dimensional $A$-modules (and $D(A^!)$ is the perfect derived
category of $A^!$ viewed as a dg-algebra, as before). 

 Furthermore, it is easy to check that the AAG-invariants of $A$ and $A^!$ are the same.                                                                
Thus, Corollary \ref{AAG-cor} leads to the following result.                         \begin{cor}\label{AAG-fd-cor} Let $A$ and $B$ be finite-dimensional gentle algebras with grading in degree $0$, such that
the AAG-invariants of $A$ and $B$ coincide (up to permutation) and satisfy one of the conditions (a)--(c) of 
 Corollary \ref{AAG-cor}. Then                                                                  $$D_f(A)\simeq D_f(B).$$                                                  
\end{cor}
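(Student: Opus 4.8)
The plan is to reduce Corollary \ref{AAG-fd-cor} to Corollary \ref{AAG-cor} by passing to Koszul duals. First I would recall that for a finite-dimensional gentle algebra $A$ with grading concentrated in degree $0$, the Koszul dual $A^!$ (taken with respect to the arrow generators, and graded by path length) is again a gentle algebra, and that it is homologically smooth. The key input is Keller's theorem (the ``exterior'' case of \cite[Sec.\ 10.5]{keller}), which provides a derived equivalence $D_f(A)\simeq D(A^!)$ between the bounded derived category of finite-dimensional $A$-modules and the perfect derived category of $A^!$ viewed as a dg-algebra with zero differential. This is precisely the bridge that lets us transport a statement about perfect derived categories of homologically smooth graded gentle algebras into a statement about finite-dimensional module categories of ordinary gentle algebras.

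The second ingredient is the compatibility of AAG-invariants with this Koszul duality: one checks that $A$ and $A^!$ have the same AAG-invariants. The reason is combinatorial --- Koszul duality on a gentle algebra interchanges the roles of permitted and forbidden paths (an arrow $\beta\alpha$ lies in $I$ for $A$ precisely when it does not for $A^!$, and vice versa), so a permitted thread of $A$ becomes a forbidden thread of $A^!$ and conversely. Under this interchange the alternating cyclic sequences $b=p_nf_n\ldots p_1f_1$ that define combinatorial boundary components are preserved, the number $n(b)$ of forbidden threads is unchanged (the count of the two thread types in an alternating cycle agree), and the degree bookkeeping works out so that the pair $(n(b),n(b)-w(b))$ is the same for $A$ and $A^!$. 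Thus the AAG-invariants of $A$ and $B$ being equal and satisfying one of (a)--(c) is equivalent to the same holding for $A^!$ and $B^!$.

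With these two facts in hand the proof is essentially a diagram chase. Assuming $A$ and $B$ are finite-dimensional gentle algebras, graded in degree $0$, with equal AAG-invariants satisfying one of (a)--(c), I would form $A^!$ and $B^!$, note by the second ingredient that these homologically smooth graded gentle algebras also have equal AAG-invariants satisfying the same condition, and apply Corollary \ref{AAG-cor} to conclude $D(A^!)\simeq D(B^!)$. Finally, invoking Keller's equivalences $D_f(A)\simeq D(A^!)$ and $D_f(B)\simeq D(B^!)$ yields
\[
D_f(A)\simeq D(A^!)\simeq D(B^!)\simeq D_f(B),
\]
as desired.

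The main obstacle, and the step deserving the most care, is verifying the invariance of the AAG-invariants under Koszul duality --- specifically confirming that the interchange of permitted and forbidden threads sends combinatorial boundary components of $A$ to those of $B$ bijectively while preserving both the thread count $n(b)$ and the winding number $w(b)$ in the appropriate combined invariant. One must handle the degree shift introduced by the path-length grading on $A^!$ and check that condition $(\star)$ is respected under duality; the trivial idempotent threads at valence-$\le 1$ vertices also need to be matched up correctly. Once this bookkeeping is settled, the rest of the argument is formal.
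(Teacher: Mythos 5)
Your proposal is correct and follows essentially the same route as the paper: Koszul duality with the path-length grading on $A^!$, Keller's equivalence $D_f(A)\simeq D(A^!)$, the matching of AAG-invariants under duality (which the paper dismisses as ``easy to check'' but which you rightly flag as the step needing the bookkeeping with permitted/forbidden threads interchanged), and then an application of Corollary \ref{AAG-cor} to $A^!$ and $B^!$. Your additional observation that properness of $A$ forces $A^!$ to be homologically smooth (no permitted cycles for $A$ means no forbidden cycles for $A^!$) is exactly the implicit point that makes Corollary \ref{AAG-cor} applicable.
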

                                                                 
\begin{example}
\label{ex1} 
Here is an example illustrating the construction of associating a surface to a gentle algebra. Consider the gentle algebra given in Figure \ref{anexampleof}.
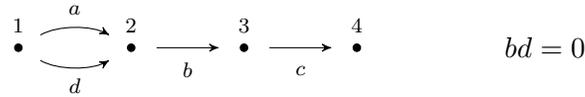
\begin{figure}[H]
\begin{tikzpicture}
    \tikzset{vertex/.style = {style=circle,draw, fill,  minimum size = 2pt,inner    sep=1pt}}

	\tikzset{edge/.style = {->,-stealth',shorten >=8pt, shorten <=8pt  }}

\node[vertex] (a) at  (0,0) {};
\node[vertex] (a1) at (1.5,0) {};
\node[vertex] (a2) at (3,0) {};
\node[vertex] (a3) at (4.5,0) {};

\node at  (0,0.3) {\tiny 1};
\node at (1.5,0.3) {\tiny 2};
\node at (3,0.3) {\tiny 3};
\node at (4.5,0.3) {\tiny 4};


\draw[edge] (a)  to[in=150,out=30] (a1);
\draw[edge] (a)  to[in=210,out=330] (a1);
\draw[edge] (a1) to (a2);
\draw[edge] (a2) to (a3);

\node at (0.75,-0.5) {\tiny $d$};
\node at (0.75,0.5) {\tiny $a$};
\node at (2.25,-0.3) {\tiny $b$};
\node at (3.75,-0.3) {\tiny $c$};

\node at (7,0) {\small $bd=0$};

\end{tikzpicture}
\caption{An example of gentle algebra}
\label{anexampleof}
\end{figure}

The forbidden threads are given by  $\{ a, bd,c, e_4\}$. The permitted threads are given by $\{ cba, d, e_3, e_4 \}$. The combinatorial boundary components are given by
	$\{ p_3 f_3 p_2 f_2 p_1 f_1, p_4 f_4 \}$ where, $f_1=e_4$, $p_1=e_4$, $f_2=c$, $p_2=e_3$, $f_3=bd$, $p_3=cba$, and $f_4 = a$, $p_4=d$.

The associated ribbon graph is given in Figure \ref{fig2}, 
where the cyclic order at vertices are given by counter-clockwise rotation.
\begin{figure}[H]
\begin{tikzpicture}
    \tikzset{vertex/.style = {style=circle,draw,  minimum size = 20pt,inner    sep=1pt}}
 \tikzset{stop/.style = {style=circle,draw, fill,  minimum size = 2pt,inner    sep=1pt}}

	\tikzset{edge/.style = {}}

\node[vertex] (a) at  (0,0) {};
\node[vertex] (a1) at (1.5,0) {};
\node[vertex] (a2) at (3,0) {};
\node[vertex] (a3) at (4.5,0) {};

\node[stop] (b) at (0.35,0) {};
\node[stop] (b1) at (4.85,0) {};
\node[stop] (b2) at (1.75,0.25) {};
\node[stop] (b2) at (3,0.35) {};

	\node at (0,0) {\tiny $a$};
	\node at(1.5,0) {\tiny $bd$};
	\node at (3,0) {\tiny $c$};
	\node at(4.5,0) {\tiny $e_4$};

	\node at (0.75,-0.8) {\tiny $2$};
	\node at (0.75, 0.8) {\tiny $1$};
	\node at (3.75,0.2) {\tiny $4$};
	\node at (2.25,0.2) {\tiny $3$};


\draw[edge] (a)  to[in=120,out=60] (a1);
\draw[edge] (a)  to[in=240,out=300] (a1);
\draw[edge] (a1) to (a2);
\draw[edge] (a2) to (a3);

\end{tikzpicture}
	\caption{Ribbon graph associated to a gentle algebra}
\label{fig2}
	\end{figure}
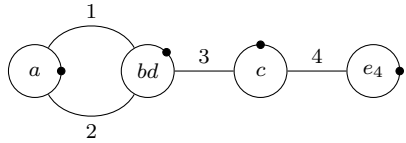
Figure \ref{surf} depicts the corresponding surface, together with the dual arcs $L_1,L_2,L_3,L_4$.

As this is a genus zero surface, the line field is determined by the winding numbers along the boundary components. The winding number along the interior puncture which corresponds to the combinatorial boundary component $p_4f_4$ is given by $|a|-|d|$ and the winding number along the outer boundary component which corresponds to the combinatorial boundary component $p_3f_3p_2f_2p_1f_1$ is the negative of this (since the two boundary components are homotopic but oriented in an opposite way) but can also be computed as $ (-|a|-|b|-|c|)+(|b|+|d|-1)+|c|+1 = |d|-|a|$.
\begin{figure}[H]
\centering
\begin{tikzpicture}
\tikzset{vertex/.style = {style=circle,draw, fill,  minimum size = 2pt,inner        sep=1pt}}
\def \radius {2.5cm}
\tikzset{->-/.style={decoration={ markings,
        mark=at position #1 with {\arrow{>}}},postaction={decorate}}}

\draw ({360/5}:\radius) arc (360/5:360/5 *5:\radius);

\draw[->-=.8] ({0}:\radius) arc ({360/5 *0}:{360/5}:\radius);

\draw[->-=.0] ({360/5 *5}:\radius/5) arc ({360/5 *5}:0:\radius/5);

	\draw[blue] (0,\radius/5) to (0,\radius);
	\draw[blue] (0,-\radius/5) to (0,-\radius);

	\begin{scope}[xscale=-1]

		\draw[blue] ({360/5 * 2}:\radius) to[in=10,out=350] ({360/5 * 3}:\radius);
		\draw[blue] ({360/5 * 2.2}:\radius) to[in=0,out=350] ({360/5 * 2.65}:\radius);

 \node[vertex] at ({360/5 * 1.6}:\radius) {} ;
 \node[vertex] at ({360/5 * 2.1}:\radius) {} ;
 \node[vertex] at ({360/5 * 2.4}:\radius) {} ;
     \node at ({360/5 *1.85}:\radius-0.6cm) {\tiny $L_3$};
    \node at ({360/5 *2.65}:\radius-0.6cm) {\tiny $L_4$ };

\node[vertex] at ({360/5 * 5}:\radius/5) {} ;

    \node[xshift=-7] at ({0}:\radius) {\tiny $a$} ;
   \node[yshift=-7, xshift=2] at ({360/5 *3.4 }:\radius)  {\tiny $b$} ;

    \node[xshift=5,yshift=-2] at ({360/5 * (2.8)}:\radius)  {\tiny $c$} ;
    \node[xshift=5, yshift=-3] at ({360/5 * (3.2)}:\radius/5)  {\tiny $d$} ;

	\end{scope}

    \node at ({360/5*1.1}:\radius-0.9cm) {\tiny $L_{1}$};
    \node at ({360/5*3.9}:\radius-0.9cm) {\tiny $L_{2}$};

\end{tikzpicture}
    \caption{Surface associated to a gentle algebra}
    \label{surf}
\end{figure}

\end{example}

\begin{example} \label{ex2} Here is another example that produces a genus 1 surface with 2 boundary components. Consider the gentle algebra given by Figure \ref{anothergentle}.

\begin{figure}[H]
\centering
\begin{tikzpicture}
    \tikzset{vertex/.style = {style=circle,draw, fill,  minimum size = 2pt,inner    sep=1pt}}

	\tikzset{edge/.style = {->,-stealth',shorten >=8pt, shorten <=8pt  }}

\node[vertex] (a) at  (0,0) {};
\node[vertex] (a1) at (1.5,0) {};
\node[vertex] (a2) at (3,0) {};
\node[vertex] (b) at (0,1) {};
\node[vertex] (b1) at (1.5,1) {};
\node[vertex] (b2) at (3,1) {};
\

\node at  (0,1.3) {\tiny 1};
\node at (1.5,1.3) {\tiny 2};
\node at (3,1.3) {\tiny 3};
\node at  (0,-0.3) {\tiny 4};
\node at (1.5,-0.3) {\tiny 5};
\node at (3,-0.3) {\tiny 6};


\draw[edge] (a) to (a1);
\draw[edge] (a1) to (a2);
\draw[edge] (b) to (b1);
\draw[edge] (b1) to (b2);
\draw[edge] (a) to (b1);
\draw[edge] (b) to (a1);
\draw[edge] (a1) to (b2);
\draw[edge] (b1) to (a2);

\node at (0.75,1.2) {\tiny $a$};
\node at (2.25,1.2) {\tiny $b$};
\node at (1.3,0.3) {\tiny $c$};
\node at (2.8,0.7) {\tiny $d$};

\node at (0.75,-0.2) {\tiny $t$};
\node at (2.25,-0.2) {\tiny $x$};
\node at (1.35,0.7) {\tiny $y$};
\node at (2.8,0.3) {\tiny $z$};

\node at (7,0.5) {\small $za=by=xc=dt=0$};

\end{tikzpicture}
\caption{Another example of a gentle algebra}
\label{anothergentle}
\end{figure}
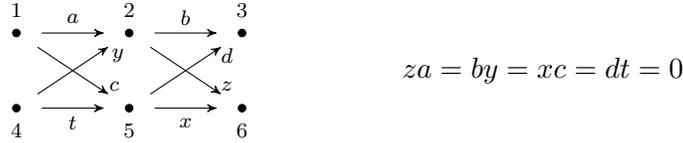

The forbidden threads are given by $\{za,by,xc,dt\}$, and the permitted threads are given by $\{ba,dc,xt,zy\}$. The combinatorial boundary components are given by $\{p_2f_2p_1f_1, p_4f_4p_3f_3\}$ where $f_1=dt, p_1=zy, f_2=xc, p_2=ba$, and $f_3=za,p_3=dc, f_4=by,p_4=xt$. 

The corresponding surface is given in Figure \ref{gen1}.

This is a genus 1 surface with 2 boundary components. To determine the line field we need to compute its winding number along the booundary components corresponding to $b_1= p_2f_2p_1f_1$ and $b_2= p_4f_4p_3f_3$ as well as winding numbers along non-separating curves depicted in grey. The horizontal one corresponds to the cycle $\alpha= f_2v_5f_1v_4f_4v_2f_3v_1$, and the vertical one corresponds to the cycle $\beta = f_1v_5f_2v_1f_3v_2f_4v_3$. From the formulae given, it is easy to compute 
\begin{align*} 
w_\eta( b_1 ) &= -|a|-|b|+(|x|+|c|-1)-|z|-|y|+(|d|+|t|-1) \\
w_\eta( b_2 ) &= -|t|-|x|+(|y|+|b|-1)-|c|-|d|+(|z|+|a|-1) \\
w_\eta(\alpha) & =  |t|-|y|+|a|-|c|\\
w_\eta(\beta) & =  -|b|-|a|+|c|+|d|
\end{align*} 
\begin{figure}[ht]
\centering
    \begin{tikzpicture}[thick,scale=0.8, every node/.style={transform shape}]
\tikzset{vertex/.style = {style=circle,draw, fill,  minimum size = 2pt,inner        sep=1.5pt}}
    \tikzset{->-/.style={decoration={ markings,
        mark=at position #1 with {\arrow{>}}},postaction={decorate}}}

    \draw [thick=1.5] (1,0) arc (0:90:1);
    \draw [->-=.7, thick=1.5] (1,6) arc (360:270:1);
    \draw [thick=1.5](7.6,6) arc (180:270:1);
    \draw [->-=.6, thick=1.5](7.6,0) arc (180:90:1);

\draw[black!30] (0,3) -- (8.6,3);
\draw[black!30] (2.3,0) -- (2.3,6);

\draw  [thick=1.5]((5,0) arc (0:180:0.7);

\draw  [->-=.5, thick=1.5]((5,6) arc (0:-180:0.7);

\draw [thick=1.5] (0,1) -- (0,5);
\draw [thick=1.5] (8.6,1) -- (8.6,5);

\draw [blue] (1,0) -- (3.6,0);
\node at (2.3,0.2) {\tiny $3$};
\draw [blue] (5,0) -- (7.6,0);
\node at (6.3,0.2) {\tiny $6$};

\draw [blue] (1,6) -- (3.6,6);
\node at (2.3,6.2) {\tiny $3$};
 \draw [blue] (5,6) -- (7.6,6);
\node at (6.3,6.2) {\tiny $6$};

\draw[blue] (0.4,0.91) -- (4.1,5.33);
\node at (2.35, 2.8) {\tiny $1$};
   
\draw[blue] (0.8,0.59) --  (7.7,5.53);
\node at (4.35, 2.8) {\tiny $2$};
	
\draw[blue] (4.6,0.62) --  (8.1,5.12);
\node at (6.8, 3.2) {\tiny $4$};
	    
\draw[blue] (0,3.42) -- (3.61,5.8);
\node at (2.2, 5.2) {\tiny $5$};

\draw[blue] (4.8,0.51) -- (8.6,3.42);
\node at (6.8,2.3) {\tiny $5$};

\node at (0.75,0.85) {\tiny $a$};
\node at (1.1,0.35) {\tiny $b$};
\node at (3.6,5.4) {\tiny $c$};
\node at (3.4,5.85) {\tiny $d$};
\node at (4.9,0.75) {\tiny $t$};
\node at (5.1,0.35) {\tiny $x$};
\node at (7.7,5.1) {\tiny $y$};
\node at (7.5,5.7) {\tiny $z$};

\node[vertex] at (4.8,5.49) {};
\node[vertex] at (3.8,0.51) {};
\node[vertex] at (7.75,0.51) {};
\node[vertex] at (0.85,5.49) {};

\end{tikzpicture}
    \caption{Genus 1 surface with 2 boundary components. Left-right and top-bottom are identified.}
\label{gen1}
\end{figure}

\end{example}

\begin{rmk}
An optimist's conjecture would be that conversely if $A$ and $B$ are homologically smooth graded gentle algebras 
      which are derived equivalent, then there exists a homeomorphism $\phi: \Sigma_A \to \Sigma_B$ inducing a 
	bijection $\Lambda_A \to \Lambda_B$ and such that $\phi_*(\eta_A)$ is homotopic to $\eta_B$. Note that to prove this, one needs to show that the topological type of $(\Sigma_A, \Lambda_A; \eta_A)$ is a derived invariant of $A$. This is encoded by the 	numerical invariants of $\eta_A$ introduced in Theorem \ref{lineorbit} (from which one can recover the topological type
	of the surface), together with
	the numbers of marked points on each boundary component.      
\end{rmk}

\begin{rmk}\label{non-smooth-rem} 
In Theorem \ref{gentle-Fuk-thm}, it is possible to drop the assumption that $A$ is smooth. Assume for simplicity
      that $A$ is proper.
      In this case, the surface $\Sigma$ would be glued together from the disks $D_f$ associated to forbidden 
      threads as before, and also disks $D_c$ with an interior hole, associated with forbidden cycles. 
      In other words, $D_c$ is an annulus whose inner boundary component has no marked points and is not glued to
      anything, while its outer boundary component is connected by strips, corresponding to the vertices in $c$, 
      to other disks (this boundary component of $D_c$ still has no stops). 
      In the presence of unmarked boundary components, there is a dual construction
      to the construction of partially wrapped Fukaya categories, 
      $\mathcal{W}(\Sigma, \Lambda; \eta)$, 
      namely, the infinitesimal wrapped Fukaya categories $\mathcal{F}(\Sigma, \Lambda; \eta)$, 
      originally introduced for general symplectic manifolds in \cite{NZ} and studied in the case of surfaces in \cite{LP-pwrap}. Its objects are graded Lagrangians 
      which do not end on the unmarked components of the boundary. 
      Thus, for non-smooth proper gentle algebras, a version of Theorem \ref{gentle-Fuk-thm} should 
      state the equivalence
	\[ D(A) \simeq  \mathcal{F}(\Sigma_A, \Lambda_A; \eta_A) \]
	However, we have not checked that the collection of Lagrangians $\{L_v\}$ 
      given by the construction in Theorem \ref{gentle-Fuk-thm} (and modified as above) generates 
      $\mathcal{F}(\Sigma_A, \Lambda_A; \eta_A)$. 
\end{rmk}

\begin{rmk} We note that the statement of Theorem \ref{gentle-Fuk-thm} is mentioned in Section 3.4 of \cite{HKK}. In the special case when the gentle algebra is trivially graded, the construction of the surface $\Sigma$ and the dual set of Lagrangians to $\{ L_i\}$ appeared again in \cite{OPS} after this work was posted on arXiv. The authors of \cite{OPS} work with the Kozsul dual gentle algebra from a representation theoretical perspective. From the point of view of \cite{HKK}, these Koszul dual algebras can be understood as the infinitesimal Fukaya categories as explained in the previous remark. Note also that when every boundary component has at least one stop which is equivalent to requiring that corresponding gentle algebras are homologically smooth and proper, infinitesimal and partially wrapped Fukaya categories are equivalent. We refer to \cite{EL} for general results about Koszul duality in the setting of Fukaya categories.  
\end{rmk}

\section{Derived equivalences between stacky curves}

      \subsection{Chains}
      
Recall that in \cite{LP-pwrap} we considered stacky curves $C(r_0,\ldots,r_n;k_1,\ldots,k_{n-1})$
obtained by gluing weighted projective lines \[ B(r_0,r_1),B(r_1,r_2),\ldots,B(r_{n-1},r_n) \] into a chain,
where $k_i\in (\Z/r_i)^*$ are used to determine the stacky structure of the nodes in this chain.

Here $B(a,b)$, for $a,b >0$, denotes the weighted projective line stack $(\A^2\setminus 0)/\G_m$,
where $\G_m$ acts with weights $(a,b)$ (see e.g., \cite[Sec.\ 2]{AKO} and references therein).
It has two stacky points $q_-$ and $q_+$ such that $\Aut(q_-)=\mu_a$, $\Aut(q_+)=\mu_b$.
To form the chain $C(r_0,\ldots,r_n;k_1,\ldots,k_{n-1})$, we glue the point $q_+$ in $B(r_{i-1},r_i)$ with the point $q_-$
in $B(r_i,r_{i+1})$, so that the obtained stacky node
locally looks like the quotient of $xy=0$ by
the action of $\mu_{r_i}$ of the form $\zeta\cdot (x,y)=(\zeta^{k_i} x,\zeta y)$.

Note that in \cite{STZ} similar stacky curves are considered but with all $k_i=-1$ (the corresponding stacky nodes are called
{\it balanced}).

We also allow the possibility for $r_0=0$ (resp., $r_n=0$): in
this case $B(0,r_1)$ (resp. $B(r_{n-1},0)$) denotes
the weighted affine line $\mathbb{A}^1(r_1)=B(1,r_1)\setminus\{q_-\}$ (resp. 
$\mathbb{A}^1(r_{n-1})=B(r_{n-1},1)\setminus\{q_+\}$). 

We showed in \cite[Thm. B]{LP-pwrap}
      that the bounded derived category of coherent sheaves on such a stacky curve is equivalent to
the partially wrapped Fukaya category of a surface obtained by a certain gluing of the annuli that we will now describe.

      Namely, let $A(r,r')$ denote the annulus with ordered boundary components that has $r$ marked points 
      $p^-_1,\ldots,p^-_r$ on the first
      component and $r'$ marked points $p^+_1,\ldots,p^+_{r'}$ on the second boundary component. We visualize $A(r,r')$ as a rectangle with upper and lower sides glued, the left side
containing the points $p^-_i$ and the right side containing the points $p^+_i$, where the points are ordered vertically (the index increases when moving up).

      Given a collection of permutations $\sigma_i\in \mathfrak{S}_{r_i}$, $i=1,\ldots,n-1$, we consider the surface 
      $\Sigma^{lin}(r_0,\ldots,r_n;\sigma_1,\ldots,\sigma_{n-1})$ obtained by gluing the annuli
\[ A(r_0,r_1),A(r_1,r_2),\ldots,A(r_{n-1},r_n) \] in the following way (``lin" stands for ``linear", since we place annuli
in a line). 
For each $i=1,\ldots,n-1$, $j=1,\ldots,r_i$,
      we glue a small segment of the boundary around
      the marked point $p^+_j$ in $A(r_{i-1},r_i)$ with a small segment of the boundary around the point
      $p^-_{\sigma_i(j)}$ in $A(r_i,r_{i+1})$ by attaching a strip, as in Figure \ref{strips}.

\begin{figure}[htb!]
\centering
\begin{tikzpicture}
\tikzset{vertex/.style = {style=circle,draw, fill,  minimum size = 2pt,inner sep=1pt}}
\def \radius {2.5cm}
\tikzset{->-/.style={decoration={ markings,
        mark=at position #1 with {\arrow{>}}},postaction={decorate}}}

\begin{scope}[xscale=0.9, yscale=1]

\draw (1.45,2.03) to [in=270,out=180] (1,2.5);
\draw (4.55,2.03) to [in=270,out=0] (5,2.5);

\draw (1.45,-2.03) to [in=90,out=180] (1,-2.5);
\draw (4.55,-2.03) to [in=90,out=0] (5,-2.5);

\draw (1.45,2.03) to[in=180,out=0] (4.55,0.406);
\draw (1.45,1.218) to[in=180,out=0] (4.55,-0.406);

\draw (1.45,0.406) to [in=210,out=0] (2.35,0.83);
\draw (2.95,1.33) to [in=180,out=30] (4.55,2.03);

\draw (1.45,-0.406) to [in=210,out=0] (2.85,0.3);
\draw (3.6,0.78) to [in=180,out=30] (4.55,1.218);

    \draw (1.45,-1.218) to (4.55,-1.218);
\draw (1.45,-2.03) to (4.55,-2.03);

\draw(1.45,1.218) to (1.45,0.406);
 \draw(4.55,1.218) to (4.55,0.406);
 \draw(1.45,-1.218) to (1.45,-0.406);
 \draw(4.55,-1.218) to (4.55,-0.406);

\draw (-2, -2) to (-2, 2);
\draw (-2,2) to [in=270,out=0] (-1.45,2.5);
\draw (-2,-2) to [in=90,out=0] (-1.45,-2.5);

\draw (-1.45,2.5) to (1,2.5);
\draw (-1.45,-2.5) to (1,-2.5);

\node[vertex] at (-2 , 1){};
\node[vertex] at (-2 , -1){};

\begin{scope}[xshift=8cm]

\draw (1.45,2.03) to [in=270,out=180] (1,2.5);
\draw (4.55,2.03) to [in=270,out=0] (5,2.5);

\draw (1.45,-2.03) to [in=90,out=180] (1,-2.5);
\draw (4.55,-2.03) to [in=90,out=0] (5,-2.5);

\draw (1.45,2.03) to[in=180,out=0] (4.55,0.406);
\draw (1.45,1.218) to[in=180,out=0] (4.55,-0.406);

\draw (1.45,0.406) to [in=210,out=0] (2.35,0.83);
\draw (2.95,1.33) to [in=180,out=30] (4.55,2.03);

\draw (1.45,-0.406) to [in=210,out=0] (2.85,0.3);
\draw (3.6,0.78) to [in=180,out=30] (4.55,1.218);

    \draw (1.45,-1.218) to (4.55,-1.218);
\draw (1.45,-2.03) to (4.55,-2.03);

\draw(1.45,1.218) to (1.45,0.406);
 \draw(4.55,1.218) to (4.55,0.406);
 \draw(1.45,-1.218) to (1.45,-0.406);
 \draw(4.55,-1.218) to (4.55,-0.406);

\draw (8, -2) to (8, 2);
\draw (8,2) to [in=270,out=180] (7.45,2.5);
\draw (8,-2) to [in=90,out=180] (7.45,-2.5);

\draw (7.45,2.5) to (5,2.5);
\draw (7.45,-2.5) to (5,-2.5);

\node[vertex] at (8 , 0){};

 \end{scope}

\draw(9,2.5) to (5,2.5);
\draw(9,-2.5) to (5,-2.5);

\end{scope}

\end{tikzpicture}
 \caption{Surface glued from annuli (top and bottom are identified). $(r_0,r_1,r_2,r_3)=(2,3,3,1)$, $\sigma_1=\sigma_2: (1,2,3) \to (2,1,3)$}
    \label{strips}
\end{figure}

      Note that the resulting surface has two special boundary components equipped with $r_0$ and $r_n$ marked points,
      respectively (there are no other marked points on the other boundary components).
      There are also other boundary components that arise in the process of gluing. Namely, for each $i=1,\ldots,n-1$,
      the boundary components situated between the $i$th and the $(i+1)$st annuli
      are in bijection with cycles in the cycle decomposition of the commutator
      $[\sigma_i,\tau]\in \mathfrak{S}_{r_i}$, where $\tau$ is the cyclic permutation $j\mapsto j-1$.
      
      We equip $\Sigma^{lin}(r_0,\ldots,r_n;\sigma_1,\ldots,\sigma_{n-1})$ with a line field $\eta$ that corresponds to
      the horizontal direction in Figure \ref{strips}. Note that its restriction to
      each annulus is the standard line field that has zero winding numbers on both boundary components
      (this is the same choice of a line field that was made in \cite[Sec.\ 2]{LP-pwrap}).
            
      It is easy to see that the winding numbers of $\eta$ on boundary components are given as follows. For the two special boundary  components the winding numbers are equal to zero. For a boundary component corresponding to a $k$-cycle in the cycle decomposition of
      $[\sigma_i,\tau]$, the winding number is $-2k$.
      
      We are going to prove that in fact all winding numbers associated with $\eta$ are even.
      For this it is useful to construct a graph
      $$\Ga(r_1,\ldots,r_{n-1})\sub \Sigma^{lin}(r_0,\ldots,r_n;\sigma_1,\ldots,\sigma_{n-1}),$$
      which is a homotopy retract of the surface. Namely, we take one vertex in the interior of each annulus: this gives
      us $n$ vertices $v_1,\ldots,v_n$. Then we add a loop $\ga_i$ at each $v_i$, corresponding to the vertical
      circle in the $i$th annulus. Then for each of the $r_i$ strips connecting the $i$th annulus with the $(i+1)$st we add
      an edge from $v_i$ to $v_{i+1}$. 
      
      \begin{lem}\label{lin-surface-sigma-inv-lem}
      One has $[w_\eta]^{(2)}=0$, i.e., all winding numbers of $\eta$ are even.
      \end{lem}
      
      \begin{proof} 
      The embedding of the graph $\Ga(r_1,\ldots,r_{n-1})$ into 
      $\Sigma^{lin}(r_0,\ldots,r_n;\sigma_1,\ldots,\sigma_{n-1})$ induces an isomorphism on homology.
      Hence, $H_1(\Sigma^{lin}(r_0,\ldots,r_n;\sigma_1,\ldots,\sigma_{n-1}))$ is spanned by the loops $\ga_i$ together with
      the loops formed by pairs of edges connecting $v_i$ with $v_{i+1}$. The latter loops can have plane projections of one
      of the two types: they look either like circles or like figure eight curves, depending on whether the projections of the
      corresponding edges cross or not. The winding number of $\eta$ along a circle in the plane
      is $-2$, while the winding number along a figure eight curve is $0$. Since $\eta$ is constant along vertical lines,
      its winding numbers along $\ga_i$ are $0$. Now the result follows from the fact that
      $[w_\eta]^{(2)}$ is a homomorphism.
      \end{proof}
      
     To get the surface related to the stacky curve $C(r_0,\ldots,r_n;k_1,\ldots,k_{n-1})$, we now take 
      permutations $\sigma_i$ of a special kind. Namely, for each $i=1,\ldots,n-1$, we consider the permutation 
      \begin{equation}\label{sigma-ki-eq}
      \sigma_i:x\mapsto -k_ix
      \end{equation}
      of $\Z/r_i\Z$.
      We denote the resulting surface by $\Sigma^{lin}(r_0,\ldots,r_n;k_1,\ldots,k_{n-1})$. We equip it with $r_0$ and
      $r_n$ stops on two special boundary components, and denote this set of stops as $\Lambda_{r_0,r_n}$.
      Now \cite[Thm.\ B]{LP-pwrap} states that
     \[ D^b(\Coh C(r_0,\ldots,r_n;k_1,\ldots,k_{n-1})) \cong \mathcal{W}(\Sigma^{lin}(r_0,\ldots,r_n;k_1,\ldots,k_{n-1}), 
     \Lambda_{r_0,r_n}; \eta). \]
      For example, taking $r_0=r_n=0$, which corresponds to replacing the first and last weighted projective line
      by weighted affine lines, we will get the fully wrapped Fukaya categories (with no stops).
      
      Note that for each $i$ the commutator
      $[\sigma_i,\tau]$ is given by $x\mapsto x+k_i+1 \mod(r_i)$, so its cycle decomposition has $p_i=\gcd(k_i+1,r_i)$ 
      cycles of length $r_i/p_i$. Thus, the boundary winding numbers of $\eta$ on $\Sigma^{lin}(r_0,\ldots,r_n;k_1,\ldots,k_{n-1})$
      are 
      \begin{itemize}
      \item
      $0$ on each of the two special boundary components (that have marked points); 
      \item
      for each $i=1,\ldots,n-1$, the winding number $-2r_i/p_i$ repeated $p_i$ times.
      \end{itemize}
      The genus of the surface $\Sigma^{lin}(r_0,\ldots,r_n;k_1,\ldots,k_{n-1})$ is given by
      $$g=\frac{1}{2}\sum_{i=1}^{n-1}(r_i-p_i).$$
      
      Now we are going to apply Corollary \ref{lineorbit-cor} to construct examples of 
      different data $(r_0,\ldots,r_n;k_1,\ldots,k_{n-1})$ that lead
      to surfaces which are homeomorphic in a way preserving the marked points on boundary components and the line fields.      
      This will give equivalences between corresponding partially wrapped Fukaya categories and hence
      between the corresponding derived categories of stacky curves.
      
      \begin{thm}\label{stacky-der-eq-thm1} 
      The graded surface with stops $\Si^{lin}(r_0,\ldots,r_n;k_1,\ldots,k_{n-1})$ is determined up to a graded homeomorphism by the unordered pair of numbers $(r_0, r_n)$ and by the unordered collection of numbers
            \begin{equation}\label{ri/pi-seq}
            ((r_1/p_1)^{p_1},\ldots,(r_{n-1}/p_{n-1})^{p_{n-1}}),
            \end{equation}
           where $(r_i/p_i)^{p_i}$ denotes the number $r_i/p_i$ repeated $p_i$ times. 
            Hence, the same data determines the category
            $D^b(\Coh C(r_0,\ldots,r_n;k_1,\ldots,k_{n-1}))$ up to equivalence.
      \end{thm}
      
      \begin{proof} The two special components (that have stops on them) are the only ones that have the winding number $0$.
      The winding numbers of all the other boundary components are determined by the sequence \eqref{ri/pi-seq}.
      Thus, our claim is that our graded surfaces are determined by their boundary invariants (numbers of points on components
      and winding numbers). 
      We want to deduce this from Corollary \ref{lineorbit-cor}.
      
      In the case when genus is $0$, there is nothing more to check.
      In the case when genus is $\ge 2$, we observe
      that by Lemma \ref{lin-surface-sigma-inv-lem}, the invariant $\si$ vanishes for our line field $\eta$.
      On the other hand, because of the two special components with the winding number $0$, the Arf-invariant does not appear,
      so we are done in this case. 
      
      Finally, if the surface $\Si^{lin}(r_0,\ldots,r_n;k_1,\ldots,k_{n-1})$ has genus $1$ then we claim that $\wt{A}(\eta)=2$.
      Indeed, by Lemma \ref{lin-surface-sigma-inv-lem}, $\wt{A}(\eta)$ is even, so this follows from the existence of a boundary
       component with the winding number $0$.
      \end{proof}
      
      \subsubsection{Merging stacky nodes into one}
      
      Note that if $k_i=-1$ for some $i$ (which means that the corresponding node on the stacky curve is {\it balanced})
      then gluing of $A(r_{i-1},r_i)$ with $A(r_i, r_{i+1})$ results in $r_i$ boundary components on which $\eta$
      has the winding number $-2$.
     Thus, if $I\sub [1,n-1]$ is a subset of indices $i$ such that $k_i=-1$, then
      setting $r_I=\sum_{i\in I} r_i$, we get a graded homeomorphism
      $$\Sigma^{lin}(r_0,\ldots,r_n;k_1,\ldots,k_{n-1})\simeq
      \Sigma^{lin}(r_0,r_I,(r_i)_{i\not\in I},r_n;-1,(k_i)_{i\not\in I}).$$

      \begin{cor}\label{merging-lin-cor} 
      Let $I\sub [1,n-1]$ is a subset such that $k_i=-1$ for $i\in I$, and let $r_I=\sum_{i\in I} r_i$. Then there is an equivalence
      $$D^b(\Coh C(r_0,\ldots,r_n;k_1,\ldots,k_{n-1}))\simeq D^b(\Coh C(r_0,r_I,(r_i)_{i\not\in I},r_n;-1,(k_i)_{i\not\in I})).$$
      \end{cor}
      
      In the particular case $I=[1,n-1]$ (corresponding to surfaces of genus $0$), the derived equivalence of the above Corollary,
      $$D^b(\Coh C(r_0,\ldots,r_n;-1,\ldots,-1))\cong D^b(\Coh C(r_0,r_1+\ldots+r_{n-1},r_n;-1)),$$ 
      was proved in \cite{sibilla}.
      
      Note that the surfaces $\Sigma^{lin}(r_\bullet;k_\bullet)$ can have genus $1$ only when $r_{i_0}-p_{i_0}=2$
      for some $i_0\in [1,n-1]$ and $r_i=p_i$ for $i\neq i_0$. This can happen only when either $r_{i_0}=3$ or 
      $r_{i_0}=4$ and $k_{i_0}=1$. These cases are distinguished by the presence of the boundary components
      with the winding number either $-6$ or $-4$. So in the cases when the genus is $0$ or $1$ 
      we do not get any other derived equivalences between our stacky chain curves except those due to
      merging of balanced nodes.
      
      In higher genus we can sometimes merge unbalanced nodes as well.
      For example, if $\gcd(k+1,r)=1$ then for any divisor $d$ of $k+1$, $d$ stacky nodes of type $(r;k)$
      can be merged into one stacky node of type $(dr;k)$.
      
      \begin{cor}\label{merging-unb-lin-cor} 
      Assume that  for $k\in \Z_r^*$ one has $\gcd(k+1,r)=1$, and let $d>0$ be a divisor of $k+1$. 
      Then we have an equivalence
      $$D^b(\Coh C(r_0,(r)^d,r_{d+1},\ldots,r_n;(k)^d,k_{d+1},\ldots,k_{n-1}))\simeq
      D^b(\Coh C(r_0,dr,r_{d+1},\ldots,r_n;k,k_{d+1},\ldots,k_{n-1})).$$
      \end{cor}
      
      \begin{proof} We have $\gcd(k+1,dr)=d\cdot \gcd(\frac{k+1}{d},r)=d$. Thus, the pair $(dr;k)$ contributes $dp$ boundary
      components with the winding number $-2r$, which is the same as the contribution of $d$ pairs $(r;k)$.
      \end{proof}
      
     \subsubsection{Derived equivalent quotients of the coordinate cross}  
      
      To get a more interesting derived equivalence in the case of genus $\ge 2$, let us 
      specialize to the case $n=2$, $r_0=r_2=0$, $r_1=r$. Note that the corresponding stacky
      curve $C(0,r,0;k)$ is the global quotient of the affine coordinate cross $xy=0$ by the $\mu_r$-action 
      $\zeta\cdot (x,y)=(\zeta^k x,\zeta y)$. We obtain the following derived equivalences between
      these affine stacky curves. 
      
      \begin{cor} For $k,k'\in(\Z/r)^*$, 
      such that $\gcd(k+1,r)=\gcd(k'+1,r)$, there exists an equivalence
      $$D^b\Coh(C(0,r,0;k))\simeq D^b\Coh(C(0,r,0;k')).$$
      \end{cor}
      
      Note that if $k\cdot k'\equiv 1\mod r$ then we have an isomorphism $C(0,r,0;k)\simeq C(0,r,0;k')$ induced by
the involution $(x,y)\mapsto (y,x)$ on the  coordinate cross. The simplest example of a nontrivial derived equivalence
of this kind is when $r=5$, $k=1$ and $k'=2$. It would be interesting to explain this derived equivalence in a purely
algebro-geometric way. Our guess is that this can be done using the variation of GIT quotient technique.
    
 \subsection{Rings} 

      Now let us consider another class of stacky curves considered in \cite{LP-pwrap}, denoted by
      $R(r_1,\ldots,r_n;k_1,\ldots,k_n)$. They are defined by gluing the weighted projective lines
      $B(r_1,r_2),B(r_2,r_3),\ldots,B(r_n,r_1)$ into a ring, where as before $k_i\in (\Z/r_i)^*$ 
      are used to determine the stacky structure of the nodes. Thus, the point $q_+$ in $B(r_{i-1},r_i)$
      is glued with the point $q_-$ in $B(r_i,r_{i+1})$ for all $i\in \Z/n$.
      
      On the symplectic side we modify our definition of the surfaces
      $\Sigma^{lin}(r_0,\ldots,r_n;\sigma_1,\ldots,\sigma_{n-1})$ as follows. Starting with
      the annuli $A(r_1,r_2),A(r_2,r_3),\ldots,A(r_n,r_1)$ we now glue them circularly using permutations
      $\sigma_1,\ldots,\sigma_n$, so that $A(r_{i-1},r_i)$ is connected by $r_i$ strips with $A(r_i,r_{i+1})$, for $i\in\Z/n$. 
      Thus, the corresponding surface could be represented similarly to Figure
      \ref{strips} but with the right and left ends identified (so that the corresponding boundary components
      disappear). We denote the resulting surface by $\Sigma^{cir}(r_1,\ldots,r_n;\sigma_1,\ldots,\sigma_n)$.
      
      Similarly to the case of a linear gluing it is equipped with a natural line field $\eta$ that corresponds to
      the horizontal direction when the surface is depicted as on Figure \ref{strips}.
      As before, the winding numbers of $\eta$ on the boundary component corresponding to a $k$-cycle in $[\si_i,\tau]$
      is equal to $-2k$. 
      
      The analog of the graph $\Ga(r_1,\ldots,r_{n-1})$ for circular gluing is given by the graph
      $$\Ga^{cir}(r_1,\ldots,r_n)\sub\Sigma^{cir}(r_1,\ldots,r_n;\sigma_1,\ldots,\sigma_n)$$
      that still has $n$ vertices $v_1,\ldots,v_n$, a loop $\ga_i$ at each $v_i$, and $r_i$ edges connecting $v_i$ to $v_{i+1}$,
      for $i\in \Z/n$. This graph is a homotopy retract of the surface, so we can calculate the homology just by analyzing loops
      in $\Ga^{cir}(r_1,\ldots,r_n)$. In particular, we see that the homology is spanned by the loops $\ga_i$, the loops
      formed by pairs of edges connecting $v_i$ with $v_{i+1}$, and by one more loop $\beta$ corresponding
      to a horizontal line in Figure \ref{strips} 
      The analog of Lemma \ref{lin-surface-sigma-inv-lem} still holds in this case and is proved similarly:
      all winding numbers of $\eta$ are even. Note that winding number along $\beta$ is zero 
      since $\eta$ is constant along horizontal lines.
      
      As before, we specialize to the case of permutations of the form \eqref{sigma-ki-eq} and denote the corresponding
      surface by $\Sigma^{cir}(r_1,\ldots,r_n;k_1,\ldots,k_n)$. 
       The boundary winding numbers of $\eta$ on this surface are calculated as before (but now we do not have two special
       boundary components). The genus of this surface is given by 
      $$g=1+\frac{1}{2}\sum_{i=1}^{n}(r_i-p_i),$$
where $p_i=\gcd(k_i+1,r_i)$.

            By \cite[Thm. B]{LP-pwrap}, we have an equivalence
      $$D^b(\Coh R(r_1,\ldots,r_n;k_1,\ldots,k_n)) \cong \mathcal{W}(\Sigma^{cir}(r_1,\ldots,r_n;k_1,\ldots,k_n);\eta).$$
      As before, we can use Corollary \ref{lineorbit-cor} to get derived equivalences between the corresponding stacky curves.

      We have $\si(\eta)=0$ for our line field, so in the case when $r_i/p_i$ is odd for all $i$, the corresponding quadratic
      form $\ov{q}_\eta$ on $\Z_2^{2g}$ is well defined and we have to calculate its Arf-invariant.
      
      \begin{definition} For a permutation $\si\in S_d$, let us consider the vector space $V(\si)$ over $\Z_2$ with the
      basis $\a_1,\ldots,\a_d$, the even pairing such that $\a_i\cdot \a_j$ for $i<j$ is given by  
      $$\a_i\cdot\a_j=\begin{cases} 0 & \si(i)<\si(j), \\ 1 & \si(i)>\si(j),\end{cases}$$
      and the unique quadratic form $q_{\si}$ compatible with this pairing such that $q_{\si}(\a_i)=0$ for all $i$.
      Let $\ov{V}(\si)$ be the quotient of $V(\si)$ by the kernel of the pairing. If the restriction of $q_\si$ to the kernel
      of the pairing is zero then $q_\si$ descends to a quadratic form $\ov{q}_\si$ on $\ov{V}(\si)$.
      \end{definition}

\begin{lem}\label{Arf-sum-lem}
(i) For $k\in \Z_r^*$, let us consider the permutation $\si_r(k)$ of $\Z_r\setminus\{0\}=\{1,\ldots,r-1\}$ given by
$x\mapsto -kx$. Then the corresponding quadratic form $q(r,k):=q_{\si_r(k)}$ is trivial
on the kernel of the pairing on $V(\si_r(k))$ if and only if $r/p$ is odd, where $p=\gcd(k+1,r)$.  
If this is the case then $q(r,k)$ descends
to a nondegenerate quadratic form $\ov{q}(r,k)$ on the $(r-p)$-dimensional space $\ov{V}(\si_r(k))$.

\noindent
(ii) Consider the standard line field $\eta$ on the surface $\Sigma^{cir}(r_1,\ldots,r_n;k_1,\ldots,k_n)$.
Assume that all $r_i/p_i$ are odd, where $p_i=\gcd(k_i+1,r_i)$. Then
the quadratic form $\ov{q}_\eta$ is well defined, and is 
the direct sum of $\ov{q}(r_i,k_i)$ over $i=1,\ldots,n$, and the form $x^2+y^2+xy$ on $\Z_2\oplus\Z_2$.
Hence, in this case
\begin{equation}\label{circular-Arf-invariant-eq}
A(\eta)=\sum_{i=1}^n A(\ov{q}(r_i,k_i))+1 \mod (2).
\end{equation}
\end{lem}

\begin{proof}
      We are going to study the quadratic form associated with the the surface $\Si=\Si^{cir}(r;k)$.
      Let us look at the simple curves $\a_i$, $i=1,\ldots,r-1$, on $\Si$, depicted on Figure \ref{circglue}.
      In addition, we have two simple curves, $\b$ and $\ga$, 
      corresponding to a horizontal and a vertical line on Figure \ref{circglue}.
\begin{figure}[htb!]
\centering
\begin{tikzpicture}
\tikzset{vertex/.style = {style=circle,draw, fill,  minimum size = 2pt,inner        sep=1pt}}
\def \radius {2.5cm}
\tikzset{->-/.style={decoration={ markings,
        mark=at position #1 with {\arrow{>}}},postaction={decorate}}}

\begin{scope}[xscale=0.9, yscale=0.8]

\draw (1.45,4.5) to [in=270,out=180] (1,5);
\draw (8.55,4.5) to [in=270,out=0] (9,5);
\draw (1.45,-4.7) to [in=90,out=180] (1,-5.2);
\draw (8.55,-4.7) to [in=90,out=0] (9,-5.2);

\draw (-1.45, -5.2) to (-1.45, 5);
\draw (-1.45,5) to (1,5);
\draw (-1.45,-5.2) to (1,-5.2);

\draw[blue] (-1, -5.2) to (-1, 5);

\draw (11.45, -5.2) to (11.45, 5);
\draw (11.45,5) to (9,5);
\draw (11.45,-5.2) to (9,-5.2);

\draw (1.45,4.5) to[in=130,out=-15] (3.55,-0.5);
\draw (1.45,3.5) to[in=120,out=-50] (2.85,-1);
\draw (8.55,-1.5) to[in=-50,out=180] (4.05,-1.3);
\draw (8.55,-2.5) to[in=-60,out=180] (3.5,-1.9);
\draw (1.45,-1.5) to[in=180,out=0] (8.55,4.5);
\draw (1.45,-2.5) to[in=180,out=0] (8.55,3.5);
\draw (7.55,1.1) to[in=180,out=-50] (8.55,0.5);
\draw (6.85,0.7) to[in=180,out=-40] (8.55,-0.5);

\draw (6.45,2.5) to[in=130,out=-10] (7.05,1.9);
\draw (5.75,1.5) to[in=140,out=-10] (6.3,1.3);

\draw (5.4,2.5) to[in=0,out=180] (2.85,2.5);
\draw (4.8,1.5) to[in=0,out=180] (2.95,1.5);

\draw (1.9,2.5) to[in=0,out=180] (1.45,2.5);
\draw (2.1,1.5) to[in=0,out=180] (1.45,1.5);

\draw (5.3,0.7) to[in=180,out=10] (8.55,2.5);
\draw (4.7,-0.3) to[in=180,out=0] (8.55,1.5);

\draw (4.3,0.5) to [in=0,out=180] (3.2,0.5);
\draw (3.7,-0.3) to [in=10,out=180] (3.5,-0.3);

\draw (2.25,0.5) to [in=0,out=180] (1.45,0.5);
\draw (2.4,-0.4) to [in=10,out=180] (1.45,-0.5);

\draw [red] (1.45,-3.7) to [in=190,out=180] (1.45,-2);
\draw [red] (1.45,-3.9) to [in=170,out=180] (2.25,0);
\draw [red] (1.45,-4.1) to [in=180,out=180] (1.85,2);
\draw [red] (1.45,-4.3) to [in=140,out=180] (1.45,4);

\draw [red] (8.55,-3.7) to [in=-5,out=0] (8.55,3.8);
\draw [red] (8.55,-3.9) to [in=10,out=0] (8.55,2);
\draw [red] (8.55,-4.1) to [in=-30,out=0] (8.2,0.1);
\draw [red] (8.55,-4.3) to [in=10,out=0] (8.2,-2);

\draw [red] (1.45,-2) to [in=175,out=5] (8.55,3.8);

\draw [red] (3.4,0) to (3.9,0);
\draw [red] (4.8,0) to [in=190,out=10] (8.55,2);

\draw [red] (3,2) to (5,2);
\draw [red] (6,1.9) to [in=140,out=-10](6.5,1.6);
\draw [red] (7.2, 0.9) to [in=150,out=-40] (8.2,0.1);

\draw [red] (1.45,4) to [in=130,out=-40] (3.2,-0.7);
\draw [red] (3.8,-1.6) to [in=190,out=-50] (8.2,-2);

\draw (1.45,-3.5) to (8.55,-3.5);
\draw[red] (1.45,-3.7) to (8.55,-3.7);
\draw[red] (1.45,-3.9) to (8.55,-3.9);
\draw[red] (1.45,-4.1) to (8.55,-4.1);
\draw[red] (1.45,-4.3) to (8.55,-4.3);
\draw[blue] (-1.45,-4.5) to (11.45,-4.5);
\draw (1.45,-4.7) to (8.55,-4.7);

\draw(1.45,2.5) to (1.45,3.5);
\draw(1.45,0.5) to (1.45,1.5);
\draw(1.45,-1.5) to (1.45,-0.5);
\draw(1.45,-3.5) to (1.45,-2.5);

\draw(8.55,2.5) to (8.55,3.5);
\draw(8.55,0.5) to (8.55,1.5);
\draw(8.55,-1.5) to (8.55,-0.5);
\draw(8.55,-3.5) to (8.55,-2.5);

\node at (8.7, 4) {\tiny $\alpha_1$};
\node at (8.7, 2.2) {\tiny $\alpha_2$};
\node at (8.7, 0.1) {\tiny $\alpha_3$};
\node at (8.7, -1.9) {\tiny $\alpha_4$};

\node at (11, -4.3) {\tiny $\beta$};

\node at (-0.7,4.3) {\tiny $\gamma$};

\end{scope}

\end{tikzpicture}
    \caption{Circular gluing with $r=5$, $k=1$ (left-right, top-bottom are identified).}
    \label{circglue}
\end{figure}

      Using the graph $\Ga^{cir}(r)$, we see that the classes $[\b]$, $[\ga]$ and $([\a_i])_{i=1,\ldots,r-1}$ 
      span $H_1(\Sigma,\Z_2)$. Furthermore, the restriction of the intersection pairing to
      the subspace generated by $([\a_i])$ gives precisely the pairing on $V(\si_r(k))$. On the other hand, both $[\b]$ and $[\ga]$
      are orthogonal to this subspace and $\b\cdot \ga=1$. It follows that the kernel of the intersection
      pairing on $H_1(\Sigma,\Z_2)$ is equal to the kernel of the pairing on $V(\si_r(k))$, and the quotient of 
      $H_1(\Sigma,\Z_2)$ by this kernel is the direct sum of $\ov{V}(\si_r(k))$ and the $2$-dimensional space spanned by $[\b]$ 
      and $[\ga]$. Since the kernel of the intersection pairing
      is spanned by the classes of the $p$ boundary components and is $(p-1)$-dimensional, we deduce that 
      $\dim \ov{V}(\si_r(k))=r-p$.

      Furthermore, the winding number along each $\a_i$ is $-2$ so $q_\eta(\a_i)=0 \mod (4)$. 
      Thus, the restriction of the $\Z_2$-valued
      form $q_\eta/2$ to the subspace $V(\si_r(k))$ is precisely $q(r,k)$. It follows that $q(r,k)$ vanishes on the kernel of the pairing
      if and only if $q_\eta$ vanishes on the boundary cycles, which happens exactly when $r/p$ is odd (recall that the value
      of $q_\eta$ on any boundary cycle is $2-2r/p \mod(4)$).
      
      Next, we observe that the winding numbers
      along either $\b$ and $\ga$ is zero, so $q_\eta(\b)=q_\eta(\ga)=2$, and hence, 
      $$(q_{\eta}/2)(x\b+y\ga)=x^2+y^2+xy \mod (2).$$ 
      
      This immediately implies (i) and (ii) in the case of $\Sigma^{cir}(r;k)$. In the case of a general surface 
      $\Sigma^{cir}(r_1,\ldots,r_n;k_1,\ldots,k_n)$, the space $H_1(\Sigma,\Z_2)$ is spanned by the classes of loops 
      $(\a_{i,j})_{j=1,\ldots,r_i-1}$ connecting $i$th and $(i+1)$st annuli (defined in the same way as $(\a_j)$), 
      as well as by the classes of vertical loops $\ga_i$, one in each annulus,
      and by the horizontal loop $\b$. The restriction of $q_\eta/2$ to the set of classes $(\a_{i,j})$, for fixed $i$, 
      agrees with the form $q(r_i,k_i)$ on $V(\si_{r_i}(k_i))$.
      Furthermore, all the classes $\ga_i-\ga_j$ lie in the kernel of the intersection form,
      and the classes $\ga_i$ and $\b$ are orthogonal to $(\a_{i,j})$. It follows that the quotient of $H_1(\Sigma,\Z_2)$
      by the kernel of the intersection form splits into a direct sum of
      $\ov{V}(\si_{r_i}(k_i))$ over $i=1,\ldots,n$, and the $2$-dimensional subspace generated by the classes of the loops $\b$, 
      $\ga=\ga_1$. Now the result follows as in the case $n=1$.      

\end{proof}

We have the following analog of Theorem \ref{stacky-der-eq-thm1} for the circular gluing.

     \begin{thm}\label{stacky-der-eq-thm2}
     The graded surface $\Si^{cir}(r_1,\ldots,r_n;k_1,\ldots,k_n)$ is determined up to a graded homeomorphism 
     by the unordered collection of numbers
           $$((r_1/p_1)^{p_1},\ldots,(r_n/p_n)^{p_n}),$$
and in addition, in the case when all $r_i/p_i$ are odd, by the invariant $A(\eta)$ given by \eqref{circular-Arf-invariant-eq}.
            Hence, the same data determines the category
            $D^b(\Coh R(r_1,\ldots,r_n;k_1,\ldots,k_n))$ up to equivalence.
      \end{thm}
      
      \begin{proof} 
      As before, this follows from Corollary \ref{lineorbit-cor}.
      In the case when the genus is $\ge 2$, we know that the invariant $\si$ vanishes for our line field $\eta$, and the assertion
      follows from Lemma \ref{Arf-sum-lem}.
      
      The case of genus $1$ appears only when $k_i=-1$ for all $i$, in which case one immediately verifies that $\wt{A}(\eta)=0$.
      \end{proof}

           
      \subsubsection{Case of irreducible stacky curves}
      
      Assume that $n=1$. Using Theorem \ref{stacky-der-eq-thm2} we can find examples of different $k$ and $k'$ such that
      the surfaces $\Sigma^{cir}(r;k)$ and $\Sigma^{cir}(r;k')$ are graded homeomorphic, so we get interesting examples
      of derived equivalences between irreducible stacky curves.
      
      \begin{cor} Assume that $r\equiv 0 \mod (4)$, and $k,k'\in\Z_r^*$ are such that $k\equiv 1 \mod (4)$, $k'\equiv 1 \mod (4)$, and
      $\gcd(k+1,r)=\gcd(k'+1,r)$. Then we have an equivalence
      $$D^b(\Coh R(r;k))\simeq D^b(\Coh R(r;k')).$$
      \end{cor}
      
      \begin{proof} In this case $k+1\equiv 2\mod (4)$, so $p=\gcd(k+1,r)\equiv 2\mod (4)$ and hence, $r/p$ is even.
      It follows that the winding numbers of boundary components are divisible by $4$, so the Arf-invariant does not appear.
      \end{proof}
            
      Now let us consider the case when $k\in\Z_r^*$ satisfies $\gcd(k+1,r)=1$. 
      Note that this is possible only when $r$ is odd, and
      by Theorem \ref{stacky-der-eq-thm2}, the graded surface
      $\Sigma^{cir}(r;k)$ (that has genus $g=(r+1)/2$) depends on $k$ only through the Arf-invariant $A(\ov{q}(r,k))$.
       
       We will compute this Arf-invariant for $k=1$ and $k=2$ in Sec.\ \ref{Arf-comp-sec} below. 
       By Theorem \ref{stacky-der-eq-thm2},
       this leads to the following derived equivalence.
        
      \begin{cor} Assume that $r\ge 7$ is not divisible by $3$ and $r\equiv \pm 1 \mod(8)$. Then we have an equivalence
      $$D^b(\Coh R(r;1))\simeq D^b(\Coh R(r;2)).$$
      \end{cor}

\begin{proof}
By Lemma \ref{irred-k1-Arf-ex} below, for odd $r$, we have 
$$A(\ov{q}(r,1))={(r-1)/2 \choose 2}\mod 2.$$
On the other hand, by Lemma \ref{irred-k2-Arf-ex}, we have
$$A(\ov{q}(r,2))=(r-1)/2 \mod 2.$$
One can easily check that these two invariants are the same precisely when $r\equiv \pm 1 \mod (8)$.
\end{proof}

      \subsubsection{Merging stacky nodes}
      
      Note that the pairs $(r_i,k_i)$ with $k_i=-1$ do not contribute to the Arf-invariant $A(\eta)$ since in this
      case $\ov{V}(\si_{r_i}(-1))=0$. Thus, the analog of Corollary \ref{merging-lin-cor} still holds.
      
      \begin{cor}\label{merging-cir-cor} 
      Let $I\sub [1,n-1]$ is a subset such that $k_i=-1$ for $i\in I$, and let $r_I=\sum_{i\in I} r_i$. Then there is an equivalence
      $$D^b(\Coh R(r_1,\ldots,r_n;k_1,\ldots,k_n))\simeq D^b(\Coh R(r_I,(r_i)_{i\not\in I};-1,(k_i)_{i\not\in I})).$$
      \end{cor}
      
      One has to be more careful with finding an analog of Corollary \ref{merging-unb-lin-cor} since sometimes one has to compare
      the Arf-invariants. However, if some other winding numbers are divisible by $4$ then the Arf-invariant does not appear.
            
     \begin{cor}\label{merging-unb-circ-cor} 
      Assume that for $k\in \Z_r^*$ one has $\gcd(k+1,r)=1$, and let $d$ be a divisor of $k+1$. 
      Assume also that there exists $i>d$ such that
      $r_i/d_i$ is even. Then we have an equivalence
      $$D^b(\Coh R((r)^d,r_{d+1},\ldots,r_n;(k)^d,k_{d+1},\ldots,k_n))\simeq
      D^b(\Coh R(dr,r_{d+1},\ldots,r_n;k,k_{d+1},\ldots,k_n)).$$
      \end{cor} 
      
Now let us consider an example where Arf-invariant does appear. Namely, for odd $r$, let us consider merging of
two stacky nodes of type $(r;1)$ into one stacky node of type $(2r;1)$. It turns out that the corresponding
surfaces are homeomorphic but not necessarily graded homeomorphic.

      \begin{cor} For odd $r$, there exists a graded homeomorphism between 
      $\Sigma^{cir}(r,r;1,1)$ and $\Sigma^{cir}(2r;1)$
      if and only if $r\equiv 1 \mod(4)$. 
      Hence, for $r\equiv 1 \mod(4)$, we have an equivalence 
      $$D^b(\Coh R(r,r;1,1))\equiv D^b(\Coh R(2r;1)).$$ 
      \end{cor}
      
      \begin{proof} 
        For the graded surface $\Sigma^{cir}(r,r;1,1)$, we have 
        $$A(\eta)=2A(\ov{q}(r,1))+1=1 \mod (2).$$
On the other hand, for $\Sigma^{cir}(2r;1)$, we have 
       $$A(\eta)=A(\ov{q}(2r,1)+1=\frac{r+1}{2} \mod (2)$$
by Lemma \ref{irred-k1-Arf-ex}. Thus, the two Arf-invariants match exactly when $r\equiv 1 \mod(4)$.
      \end{proof}

       \subsection{Computation of the Arf-invariants}\label{Arf-comp-sec}
      
\begin{lem}\label{irred-k1-Arf-ex}
For odd $r$ one has 
$$A(\ov{q}(r,1))={(r-1)/2 \choose 2}\mod (2), $$
$$A(\ov{q}(2r,1)=\frac{r-1}{2}\mod (2).$$
\end{lem}

\begin{proof}
Since $\si_r(1)$ is the order reversing permutation of $\{1,\ldots,r-1\}$,
$q=q(r,1)=\ov{q}(r,1)$ is the unique quadratic form on the $\Z_2$-vector space $V=V(\si_r(1))$ with the basis $\a_1,\ldots,\a_{r-1}$,
compatible with the symplectic pairing given by 
\begin{equation}\label{q-r1-pairing-eq}
\a_i\cdot \a_j=1 \text{ for } i\neq j,
\end{equation}
and satisfying $q(\a_i)=0$ for all $i$.

It is well known that the Gauss sum
$$G(q):=\sum_{x\in V} (-1)^{q(x)}$$
is equal to $\pm 2^{(r-1)/2}$ and its sign determines the Arf-invariant.
It is easy to see that for any $x\in V$, one has
$$q(x)=(-1)^{{k\choose 2}},$$ 
where $k$ is the number of nonzero coordinates of $x$.
Thus, we have 
$$G(q)=\sum_{k=0}^{r-1} {r-1\choose k} (-1)^{{k\choose 2}}.$$
Now we observe that 
$$(-1)^{{k\choose 2}}=\frac{1-i}{2}\cdot i^k+\frac{1+i}{2}\cdot (-i)^k,$$
where $i=\sqrt{-1}$. Thus, we have
\begin{align*}
&\sum_{k=0}^{r-1} {r-1\choose k} (-1)^{{k\choose 2}}=
\frac{1-i}{2}\cdot (1+i)^{r-1}+\frac{1+i}{2}\cdot (1-i)^{r-1}=\\
&2^{(r-1)/2}\cdot [\frac{1-i}{2}\cdot i^{(r-1)/2}+ \frac{1+i}{2}\cdot (-i)^{(r-1)/2}]=2^{(r-1)/2}\cdot (-1)^{{(r-1)/2\choose 2}},
\end{align*} 
which proves our formula for the Arf-invariant of $\ov{q}(r,1)$.

Now, let us consider the form $q=q(2r,1)$ on the vector space $W=V(\si_{2r}(1))$ with the basis $\a_1,\ldots,\a_{2r-1}$,
equipped with the even pairing given by \eqref{q-r1-pairing-eq}, where $q$ is compatible with the pairing and satisfies
$q(\a_i)=0$. Note that the kernel of the pairing is spanned by the vector $v_0=\sum_{k=1}^{2r-1}\a_k$, and we have
$$q(v_0)={2r-1\choose 2}=0 \mod (2),$$ 
since $r$ is odd.
Thus, the form $q$ descends to a nondegenerate quadratic form $\ov{q}=\ov{q}(2r,1)$ on $\ov{W}=W/\lan v_0\ran$.
We claim that its Arf-invariant is
$$A(\ov{q})=\frac{r-1}{2} \mod 2.$$

Indeed, again we consider the Gauss sum
$$G(\ov{q}):=\sum_{x\in \ov{W}} (-1)^{\ov{q}(x)}.$$
We have
\begin{align*}
&G(\ov{q})=\frac{1}{2}\cdot G(q)=
\frac{1}{2}\sum_{k=0}^{2r-1} {2r-1\choose k}(-1)^{{k\choose 2}}=\\
&\frac{1-i}{4}\cdot (1+i)^{2r-1}+\frac{1+i}{4}(1-i)^{2r-1}=(-4)^{(r-1)/2}. 
\end{align*}
\end{proof}

\begin{lem}\label{irred-k2-Arf-ex}
Assume that $r$ is odd and not divisible by $3$. Then
$$A(\ov{q}(r,2))=\frac{r-1}{2} \mod (2).$$
\end{lem}

\begin{proof}
The form $q=q(r,2)=\ov{q}(r,2)$ is in $\Quad(V)$, where $V$ is the $\Z_2$-space with the basis $\a_1,\ldots,\a_{r-1}$ and the symplectic pairing given by
      $$\a_i\cdot \a_j=\begin{cases} 0, & i<j< i+ (r-1)/2, \\ 1, &\text{otherwise},\end{cases}$$
where $i<j$. Furthermore, $q$ is determined by $q(\a_i)=0$ for all $i$.
      It is easy to see that by renumbering the classes $(\a_i)$ as follows: 
      $$\a'_1=\a_{(r-1)/2},\ldots,\a'_{(r-1)/2}=\a_1,\a'_{(r-1)/2+1}=\a_{r-1},\ldots,\a'_{r-1}=\a'_{(r-1)/2+1},$$
      we get 
$$\a'_i\cdot \a'_j=\begin{cases} 1, & i<j<i+(r-1)/2, \\ & 0, j\ge i+(r-1)/2,\end{cases}$$ 
and $q\in\Quad(V)$ still satisfies $q(\a'_i)=0$ for all $i$.

We will compute the Arf-invariant by relating $(V,q)$ to another space with a quadratic form.
For every $k\ge 0$, such that $k\not\equiv 2 \mod(3)$, let us consider a $\Z_2$-vector space $W_k$ with the
basis $\b_1,\ga_1,\ldots,\b_k,\ga_k$, the even pairing given by the rule
$$\b_i\cdot \b_j=1 \text{ for } i\neq j; \ \ \ga_i\cdot \ga_j=1 \text{ for } i\neq j;$$
$$\b_i\cdot \ga_j=1 \text{ for } i\le j; \ \ \b_i\cdot \ga_j=0 \text{ for }i>j,$$
and the quadratic form $q_k$ in $\Quad(W_k)$ such that $q_k(\b_i)=q(\ga_i)=1$ for all $i$.

First, we will prove that $A(q)=A(q_{(r-1)/2-2})$ and then we will prove that
\begin{equation}\label{A-qk-eq}
A(q_k)= k \mod (2).
\end{equation}
To relate $(V,q)$ with $(W_{(r-1)/2-2},q_{(r-1)/2-2})$ let us 
consider the $2$-dimensional isotropic subspace $I\sub V$ spanned by $\a'_1$ and $\a'_{r-1}$. We have
$q|_I\equiv 0$, so the Arf-invariant of $q$ is equal to that of the induced quadratic form on $I^\perp/I$.
Now setting 
$$\ga_i=\a'_2+\a'_{2+i}, \ \ \b_i=\a'_{(r-1)/2+1}+\a_{(r-1)/2+1+i},$$
for $i=1,\ldots,(r-1)/2-2$, we get an identification of $I^\perp/I$ with $W_{(r-1)/2-2}$, compatible with the quadratic forms.
Hence, $A(q)=A(q_{(r-1)/2-2})$.

To prove \eqref{A-qk-eq} we use induction on $k$.
It is easy to check that $A(q_1)=1$ (and $A(q_0)=0$ for trivial reasons), so it is enough to
establish the formula
$$A(q_k)=A(q_{k-3})+1.$$
To this end we consider the $2$-dimensional isotropic subspace $J\sub W_k$
spanned by $\b_k+\ga_1$ and $\b_1+\b_k+\ga_k$. We have $q_k|_J=0$, and 
our formula follows from the identification
$$J^{\perp}/J\simeq W_{k-3}\oplus W_1,$$
where the standard basis of $W_{k-3}$ corresponds to the elements 
$$(\b_2+\b_{2+i} \mod J, \ga_2+\ga_{2+i} \mod J)_{1\le i\le k-3}$$
while a copy of $W_1$ spanned by $\b_k\mod J$ and $\ga_k\mod J$.
\end{proof}

\end{document}